\numberwithin{equation}{section}       
\theoremstyle{plain}
\newtheorem{theorem}{Theorem}[section]
\newtheorem{prop}{Proposition}[section]
\newtheorem{coro}[prop]{Corollary}
\newtheorem{lemma}[prop]{Lemma}
\newtheorem{definition}[prop]{Definition}
\newtheorem{remark}[prop]{Remark}
\theoremstyle{remark}
\newtheoremstyle{citing}
  {3pt}
  {3pt}
  {\itshape}
  {}
  {\bfseries}
  {.}
  {.5em}
  {\thmnote{#3}}
\theoremstyle{citing}
\DeclareMathAlphabet{\mathpzc}{OT1}{pzc}{m}{it} 
\newcommand{\C}{\mathbb{C}}
\renewcommand{\H}{\mathbb{H}}
\newcommand{\N}{\mathbb{N}}
\newcommand{\R}{\mathbb{R}}
\newcommand{\teta}{\widetilde{\teta}}
\newcommand{\eps}{\varepsilon}
\begin{document}

\title[]{A simple proof of Sullivan's complex bounds}

\author{Genadi Levin}

\address{Institute of Mathematics, The Hebrew University of Jerusalem, Givat Ram,
Jerusalem, 91904, Israel}

\email{levin@math.huji.ac.il}

\date{\today}

\maketitle

\begin{abstract}

About 35 years ago Dennis Sullivan proved a precompactness property ("complex bounds") for infinitely renormalizable real quadratic polynomials with bounded combinatorics.
We present a simple "soft" proof of this remarkable result.

\end{abstract}

\section{Introduction}
A real quadratic map $f(x)=x^2+c$  of the real line $\R$ into itself is renormalizable of period $q\ge 2$ if there exists a
symmetric closed $q$-periodic interval
$I=I(q)$, that is, $f^q(I)\subset I$, $f^q: I\to I$ is unimodal, i.e., a folding map of an interval with a single turning point (which will usually assumed to be at $0$), and one of the end points $\beta$ of $I$ is a fixed point of $f^q$.
To every such $f^q: I\to I$ one associates a {\it renormalization} (rescaling)
$$R^q(f)=\beta^{-1}\circ f^q\circ \beta:[-1,1]\to [-1,1].$$

The map $f:\R\to\R$ is called infinitely renormalizable with combinatorics bounded by $N$,
if $f$ is renormalizable of periods $2\le q_1<...q_n<q_{n+1}<...$ and $a_n:=q_{n}/q_{n-1}\le N$ for all $n$.
It admits an infinite decreasing sequence of symmetric periodic intervals $I(q_n)\to \{0\}$ such that $f^{q_n}: I(q_n)\to I(q_n)$ is unimodal, and the corresponding sequence $R^{q_n}f:[-1,1]\to [-1,1]$ of renormalizations.
See \cite{MS}, \cite{mcm} for the renormalization theory in one-dimensional and complex dynamics.

Recall \cite{DH} that a {\it polynomial-like (PL)} map is a triple $(V', V, g)$ where $V', V$ are topological disks in the plane
such that
$\overline{V'}\subset V$ and $g: V'\to V$ is a proper holomorphic map of some degree $d\ge 2$.


\begin{theorem}\label{t-su} (Sullivan \cite{Su1}, \cite{MS})
Given $N$ there exists $m=m(N)>0$ as follows. Let $f$ be an infinitely renormalizable real quadratic polynomial with combinatorics
bounded by $N$.
Then, for every $n\ge n(f)$,
the corresponding renormalization $R^{q_n}(f):[-1,1]\to[-1,1]$ admits a polynomial-like extension $R^{q_n}(f): V_n'\to V_n$ of degree $2$ such that
$\mod(V_n\setminus\overline{V_n'})\ge m$, for the modulus of the annulus $V_n\setminus\overline{V_n'}$.
\end{theorem}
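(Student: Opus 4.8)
\emph{Strategy.} Since $f$ is a polynomial, $f^{q_n}$ is entire, so there is nothing to complexify: it suffices to produce, for each $n\ge n(f)$, topological discs $V_n'\Subset V_n$ with $\overline{I(q_n)}\subset V_n'$ on which $f^{q_n}$ restricts to a proper degree-$2$ map whose modulus $\mod(V_n\setminus\overline{V_n'})$ is bounded below independently of $n$; conjugating by the affine rescaling $\beta_n$ then gives the desired polynomial-like extension of $R^{q_n}(f)$. The input is the \emph{real} a priori bounds (see \cite{MS}): for $n\ge n(f)$ the little intervals $T_i:=f^i(I(q_n))$, $0\le i\le q_n-1$, have pairwise disjoint interiors and, after rescaling by $|I(q_n)|$, lengths and mutual distances bounded above and below only in terms of $N$; in particular $T_0=I(q_n)$ is at a definite distance from $T_1,\dots,T_{q_n-1}$, $f^{q_n}(I(q_n))$ occupies a definite fraction of $I(q_n)$, and $I(q_n)$ carries a definite collar $\hat J\supset I(q_n)$ still disjoint from all $T_i$, $i\ge1$. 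Along the critical orbit one has $f^{q_n}=g\circ f$ with $g:=f^{q_n-1}$, where $f\colon I(q_n)\to T_1$ is the unique quadratic fold (critical value $c=f(0)$) and $g$ carries $T_1$ diffeomorphically onto $f^{q_n}(I(q_n))\subset I(q_n)$; so, backwards, $(f^{q_n})^{-1}$ is a branch of $\sqrt{\,\cdot\,-c}$ precomposed with the univalent branch $\Psi:=g^{-1}$ of $f^{-(q_n-1)}$ that follows the critical orbit.

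The univalent part is harmless. The critical values of the polynomial $g=f^{q_n-1}$ are the points $f(0),f^2(0),\dots,f^{q_n-1}(0)$ of the real forward orbit of the critical point, lying in $T_1,\dots,T_{q_n-1}$ respectively; they therefore miss $\hat J$, and $\infty$ lies on the slit $\R\setminus\hat J$, so $\C\setminus(\R\setminus\hat J)$ contains no critical value of $g$, every inverse branch of $g$ is univalent on it, and $\Psi$ maps it into $\C\setminus(\R\setminus\hat T_1)$ for the interval $\hat T_1:=\Psi(\hat J)\supset T_1$. Writing $D_\theta(I)=\{z:I\text{ subtends an angle }>\theta\text{ at }z\}$ (so $D_{\pi/2}(I)$ is the round disc on diameter $I$, and $D_\theta(I)\searrow I$ as $\theta\to\pi$), and using that the harmonic measure of a real interval $I$ at a point $z$ of the upper half-plane, relative to the half-plane, is $\frac1\pi$ times the angle subtended by $I$ at $z$, conformal invariance and domain monotonicity of harmonic measure give
\[
\Psi\big(D_\theta(\hat J)\big)\ \subset\ D_\theta(\hat T_1)\qquad(0<\theta<\pi).
\]
Thus the long univalent inverse branch never decreases the angle — it improves it strictly — and no error accumulates over the $q_n-1$ monotone steps.

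The real content of the theorem, and the step I expect to be the main obstacle, is to keep the modulus from collapsing through the single fold. When one pulls a neighbourhood $V_n$ of $I(q_n)$ back through $f^{q_n}$, the monotone steps preserve the angle but contract in the hyperbolic metric, and the fold $\sqrt{\,\cdot\,-c}$ contracts further; because $I(q_n)$ has only a definite — possibly small — collar, a crude one-shot pull-back of a slightly enlarged $I(q_n)$ need not land compactly inside $V_n$ with a definite modulus, and the degree-$2$ requirement forbids using a large neighbourhood of the parent interval $I(q_{n-1})$ (which would see all the other critical points of $f^{q_n}$ inside $I(q_{n-1})$). The mechanism that rescues the estimate is the classical "complex bounds improve" phenomenon: the quadratic fold, together with the Gr\"otzsch/Teichm\"uller inequality for nested annuli, forces $\mod(V_n\setminus\overline{V_n'})$ to be at least a definite increasing function of the modulus fed in — one that exceeds the identity for small values — so that, running the construction through the $\le N$ successive first returns making up $f^{q_n}=(f^{q_{n-1}})^{a_n}$, or across successive renormalisation levels, the moduli cannot fall below the attracting fixed point of that function, a crude complex bound at level $n(f)$ (supplied by the real bounds) providing the initial positivity. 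A \emph{soft} proof is one that establishes this improvement inequality qualitatively — without computing the improvement function — using only the Schwarz lemma, the Gr\"otzsch inequality, and the elementary geometry of $\sqrt{\,\cdot\,}$ near a branch point, and then extracts a uniform $m=m(N)>0$ from the fact that every geometric quantity in the argument is controlled by the real a priori bounds. Everything else — the critical-orbit factorisation, the reality of the critical values of $f^{q_n-1}$, the harmonic-measure estimate for $\Psi$, the degree count, and the passage back to $R^{q_n}(f)$ via $\beta_n$ — is routine once the improvement step is in hand, and the hypothesis $n\ge n(f)$ is exactly the range where the real a priori bounds apply.
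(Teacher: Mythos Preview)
Your proposal is an outline of the classical direct approach, not a proof: you correctly set up the factorization $f^{q_n}=g\circ f$, correctly argue that the univalent inverse branch $\Psi$ preserves Poincar\'e angles, and then explicitly flag the fold step as ``the main obstacle'' --- after which you only \emph{describe} what a successful argument would achieve (``a definite increasing function of the modulus fed in --- one that exceeds the identity for small values'') without supplying it. That improvement inequality is exactly where all the work lies. The Schwarz lemma and Gr\"otzsch inequality alone do not produce it; in the classical proofs one needs a concrete geometric analysis of how $z\mapsto\sqrt{z-c}$ acts on Poincar\'e disks $D_\theta(I)$ (cf.\ Proposition~\ref{p-ls} here, taken from \cite{LS}), together with an inductive ``smallest interval'' argument to prevent the angle from degenerating over successive returns. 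Your sketch contains neither ingredient, so as written the argument has no mechanism forcing a definite modulus to survive the fold, and hence no way to extract a uniform $m(N)$.

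It is also worth noting that the paper's proof follows an entirely different route and never proves any modulus-improvement inequality. Instead of working at finite level $n$, it passes to limits: any subsequential limit $g_0$ of $R^{q_n}f$ lies in the Epstein class and, via bounded combinatorics and real bounds, sits at the bottom of an infinite tower $\{g_m\}$ of Epstein maps on larger and larger intervals (Proposition~\ref{p-infren-tower}). The tower is used to show that the minimal closed $g_0$-completely-invariant set containing $[-1,1]$ is compact in the slit plane (Lemmas~\ref{l-alt}--\ref{l-T}); an abstract result, Theorem~\ref{t-bi}, then produces a polynomial-like restriction of $g_0$, and compactness of the space of towers upgrades this to a uniform bound. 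So the paper's ``softness'' is of a different kind from what you envision: it is soft by contradiction and compactness at the level of limit objects, not by a qualitative version of the fold estimate.
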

This theorem is proved in \cite{Su1} in a more generality, for maps from Epstein's class, see Definition \ref{d-eps}. Our proof also works in
this class.

Theorem \ref{t-su} was a key step in Sullivan's proof \cite{Su1} that bounded type renormalizations converge to the fixed renormalization orbit,
as well as in later
proofs \cite{mcm1}, \cite{ly1} of other Feigenbaum's conjectures \cite{feig}.
Since \cite{Su1}, complex bounds
have been obtained, essentially, for all real analytic
infinitely renormalizable maps
\cite{LS}, \cite{grsw},
\cite{lyym},
\cite{shen}, \cite{css}, and
crucially used e.g. in \cite{LS}, \cite{grsw}, \cite{ly}, \cite{kss1}, \cite{kss2}.



It is usually a big deal to obtain complex bounds, and proofs
in the works cited above are very technical.
We propose a new approach which
is based on the main result of \cite{L}, more precisely, on its very particular case as follows:
\begin{theorem}\label{t-bi}
Let $g:U_{-1}\to U_0$ be a proper holomorphic map of degree $d\ge 2$ where $U_{-1}, U_0$ are open simply connected subsets of $\C$ such that $U_{-1}\subset U_0$.
Suppose that $X\subset U_0$ is a compact and full (i.e. $\C\setminus X$ is connected) set which is completely invariant, i.e.,
$g^{-1}(X)=X$, and contains all critical points of $g$. Then there exists a polynomial-like restriction $g:V'\to V$ where $V', V$ are neighborhoods of $X$.
\end{theorem}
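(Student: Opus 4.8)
The plan is to build the polynomial-like restriction $g:V'\to V$ by first producing a suitable neighborhood $V$ of $X$ in $U_0$ and then setting $V'$ to be the connected component of $g^{-1}(V)$ containing $X$; the whole difficulty is to choose $V$ so that (i) $\overline{V'}\subset V$, (ii) $V'$ and $V$ are topological disks, and (iii) $g:V'\to V$ is proper of some degree $\ge 2$. Since $X$ is completely invariant, $g^{-1}(X)=X\subset V$, so once $V$ is chosen the component $V'\ni X$ is well defined; properness of $g:V'\to V$ is automatic for a component of a preimage of an open set under a proper map, and the degree is $\ge 2$ because $X$ contains a critical point of $g$. So the crux is the topological/conformal construction of $V$.

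First I would observe that since $X$ is compact, full, and contained in the simply connected open set $U_0$, the complement $U_0\setminus X$ is an open set whose components are either topological disks or annuli; more importantly, small neighborhoods of $X$ can be taken to be topological disks. Concretely, I would take a smooth Jordan domain $W$ with $X\subset W\subset\overline W\subset U_0$ that is a topological disk — this exists because $X$ is full and compact — and then shrink it toward $X$. The key analytic input is the following contraction estimate, which is exactly where Theorem~\ref{t-bi} draws on the main result of \cite{L}: because $X$ carries all the critical values relevant to $g$ and is completely invariant, pulling back a neighborhood of $X$ by $g$ strictly contracts it in the hyperbolic (or conformal) sense on $U_0\setminus X$, away from $X$ itself. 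Thus, starting from $W$ as above and letting $W_1$ be the component of $g^{-1}(W)$ containing $X$, one gets $\overline{W_1}\subset W$ provided $W$ was chosen in the correct isotopy/homotopy class and small enough; iterating, $W_n\searrow X$ with $\overline{W_{n+1}}\subset W_n$.

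The remaining point is to guarantee that the relevant preimage component $W_1$ is again a topological disk and that $g:W_1\to W$ has the right degree. Here I would use that $g:U_{-1}\to U_0$ is proper of degree $d$ between simply connected domains, so by the Riemann–Hurwitz formula the component $W_1$ of $g^{-1}(W)$ containing the full completely invariant set $X$ is simply connected as soon as $W$ is chosen to contain no critical values of $g$ outside $X$ — and $X\supset$ all critical points forces all "extra" critical values into $X\subset W$, so after possibly enlarging $W$ slightly we may assume the only branching over $W$ sits over $X$, keeping $W_1$ a disk. Setting $V\=W_n$ and $V'\=W_{n+1}$ for $n$ large enough that the contraction has kicked in then gives $\overline{V'}\subset V$, both disks, $g:V'\to V$ proper; and the degree is at least $2$ since a critical point of $g$ lies in $X\subset V'$. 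The main obstacle I anticipate is precisely the contraction step: one must rule out the degenerate possibility that the nested domains $W_n$ fail to shrink to $X$ (e.g. stabilize on a larger completely invariant continuum), and this is exactly what the cited result of \cite{L} is designed to prevent — so the plan is to quote it in the stated particular case rather than reprove the contraction by hand.
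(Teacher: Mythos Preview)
The paper does not contain a proof of Theorem~\ref{t-bi}: it is quoted verbatim as a particular case of the main result of \cite{L} and then used as a black box in the proof of Theorem~\ref{t-m}. So there is no in-paper argument to compare your proposal against.

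Judged on its own, your proposal has a genuine gap that renders it circular. You correctly isolate the crux --- given a Jordan disk $W\supset X$ with $\overline W\subset U_0$, one must show that the component $W_1$ of $g^{-1}(W)$ containing $X$ satisfies $\overline{W_1}\subset W$ --- but you do not prove it. Instead you write that this ``is exactly where Theorem~\ref{t-bi} draws on the main result of \cite{L}'' and propose to ``quote it in the stated particular case rather than reprove the contraction by hand.'' The difficulty is that Theorem~\ref{t-bi} \emph{is} that stated particular case of \cite{L}; it is not established in the present paper via some auxiliary lemma imported from \cite{L}. By deferring the contraction step to \cite{L} you are invoking the theorem you set out to prove.

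Independently of the circularity, the ``hyperbolic contraction on $U_0\setminus X$'' heuristic does not apply as stated: the hypothesis is only $U_{-1}\subset U_0$, not $\overline{U_{-1}}\subset U_0$, so the standard Schwarz--Pick argument does not immediately yield a \emph{strict} inclusion $\overline{W_1}\subset W$, nor even the nesting $W_1\subset W$ that your iteration presupposes. (This is precisely the situation in the application, where $U_0=\C_{J_0}$ is a slit plane and $U_{-1}=\Omega_{g_0}$ is unbounded and touches the slits.) Finally, several of your topological steps --- taking ``the'' component of $g^{-1}(W)$ containing $X$, and asserting that small neighborhoods of $X$ are disks whose pullbacks stay simply connected --- implicitly use that $X$ is connected, which is not assumed; cf.\ Remark~\ref{r-bi}, where it is noted that connectedness of $X$ is a posteriori and that the disconnected case requires the full strength of \cite{L}.
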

The idea of using Theorem \ref{t-bi} to prove
complex bounds is roughly as follows. It is well-known \cite{Su1} that any limit of renormalizations $R^{q_n}f$ is a unimodal map
$g:[-1,1]\to [-1,1]$ which extends to a map $g: U_{-1}\to U_0$ from the Epstein class, and the convergence is uniform on compacts in $U_{-1}$ assuming we start with $f$ from the Epstein class.
Here $g: U_{-1}\to U_0$ is a proper analytic map of degree $2$, where $U_{-1}, U_0$ are as in Theorem \ref{t-bi}
(in fact, $U_0$ is a slit complex plane, see below). Let $T$ be a minimal $g$-completely invariant set which contains the interval $[-1,1]$.
The main goal is to prove that the closure $\mathcal{J}$ of $T$ is a proper subset of $U_0$. Then Theorem \ref{t-bi} applies to the topological hull $\widehat{\mathcal{J}}$ of $\mathcal{J}$, the union of $\mathcal{J}$ with all connected components of its complement, and we end up with
a PL restriction $g:V'\to V$ in a neighborhood of $\widehat{\mathcal{J}}$.
Since PL maps are stable under small perturbations, a nearby PL map persists
for any nearby (to $g$) renormalization $R^{q_n}f$, and the modulus of this PL map is close to the limit modulus
$\mod(V\setminus \overline{V'})$.
That would mean complex bounds.
\begin{remark}\label{r-bi}
We don't assume a priori that the compact $X$ in Theorem \ref{t-bi} is connected although this holds a posteriori as $X$ turns out to be the non-escaping set
of a PL maps with non-escaping critical points.
However, it can be seen beforehand, from the construction, see Sect. \ref{s-m}, that
the set $X=\widehat{\mathcal{J}}$ as above is connected.
In this case (of connected $X$) the proof of Theorem \ref{t-bi}
becomes significantly simpler, see \cite{L}.
\end{remark}
We realize the idea described above by giving a detailed proof
for infinitely renormalizable maps with bounded combinatorics.
To be more precise, we prove that $\mathcal{J}$ is proper inside of $U_0$ by a contradiction. Assuming the contrary, first we show that $\mathcal{J}$ must meet slits of the slit complex plane $U_0$, i.e., it is impossible for $\mathcal{J}$ to be unbounded without touching the slits. This part of the proof holds for any combinatorics.
On the other hand, the bounded combinatorics along with real bounds imply that the function $g$ lies at the bottom of an infinite tower of maps from the Epstein class
(cf. \cite{mcm1}, see Theorem \ref{t-m} and Remark \ref{r-tower}). This allows us to extend $g:U_{-1}\to U_0$ to a holomorphic function in bigger simply connected domains, somewhat similar to \cite{EL}. That leads to a contradiction, see the proof of Theorem \ref{t-m}. Then Theorem \ref{t-su} is reduced to Theorem \ref{t-m} with help of (basically, known) Proposition \ref{p-infren-tower}.

{\it Notations}.

For $a,b\in\R$, $<a,b>$ is the closed interval with end points $a,b$ (where $a>b$ is possible),

$\R_+=\{x>0\}$, $\R_-=\{x<0\}$,

$\H^+=\{\Im(z)>0\}$, $\H^-=-\H^+$, the upper and lower half-planes,

$\Pi=\{\Im(z)>0, \Re(z)>0\}$, $\overline\Pi=\{\Im(z)\ge 0, \Re(z)\ge 0\}$,


for an open interval $A\subset\R$,
$\C_{A}=(\C\setminus \R)\cup A$, a slit complex plane.

{\bf Acknowledgment.} The author thanks Sebastian van Strien for helpful comments.

\section{Preliminaries: the Epstein class}\label{s-eps}
\begin{definition}\label{d-eps} (Epstein classes $E_\beta(J)$, cf. \cite{E}, \cite{Su1}, \cite{MS})
Let $g: [-1,1]\to [-1,1]$ be a real analytic map as follows: (1) $g$ is unimodal: $g'(x)=0$ iff $x=0$, (2) $g(-1)=g(1)=1$
and $0\in g([-1,1])$, i.e., $g(0)\le 0$. Given a symmetric w.r.t. $0$ open interval $J$, we say that $g\in E_1(J)$ if
there exists another open symmetric interval $J'$ such that $[-1,1]\subset J'\subset J$, $g$ extends to a real-analytic unimodal map $g:J'\to J$ and there exists a representation
$g=F\circ Q$ where $Q(z)=z^2$ and $F: [0,1]\to [g(0),1]$ is a diffeomorphism such that $F^{-1}$ extends to a univalent (i.e. holomorphic injective)
map defined on $\C_J$.
More generally, given $\beta\in\R\setminus \{0\}$,
$g\in E_{\beta}(J)$ (or simply $E_\beta$) if $g=\beta g_1 \beta^{-1}$ for some $g_1\in E_1(\beta^{-1}J)$.
\end{definition}
\begin{lemma}\label{l-eps}
1. Let $g\in E_1(J)$. Then:

1a: $g:[-1,1]\to [-1,1]$ extends to a proper holomorphic map $g=F\circ Q:\Omega\to \C_J$ of degree two with the only critical point at $0$, where $\Omega=\Omega_g$ is
a simply connected domains such that $\Omega\subset \C_J$, $\Omega\cap\R=J'$, $\Omega$ is symmetric w.r.t. $\R$ and $i\R$.
Moreover, $\{g^{-i}(\C_{J})\}_{i=0}^\infty$ is a decreasing sequence of connected simply connected domains.
Furthermore, $F^{-1}$ maps the upper $\H^+$ (lower $\H^-$) half-plane into itself and $F^{-1}(J)\subset\R$,
$F^{-1}(1)=1$ and $F^{-1}(g(0))=0$,
$g(\bar z)=\overline{g(z)}$ and $F^{-1}(\bar z)=\overline{F^{-1}(z)}$.

1b: if $g:\Omega\to\C_J$ has an attracting or parabolic cycle, then its immediate basin of attraction contains $0$.

1c: $g'(1)>1$.

2. The space $E_1(J)$ is compact: if $g_n=F_n\circ Q\in E_1(J)$, $n=1,2,...$, then there exists a subsequence $(n_j)$ such that $F_{n_j}^{-1}:\C_J\to\C$ converges uniformly on compacts to a univalent map $F^{-1}:\C_J\to \C$, moreover,
$g:=F\circ Q\in E_1(J)$ and $g_{n_j}$ converges to $g$ uniformly on compacts in $\Omega_g$.

3. More generally, if $\beta_n\to\beta\neq 0$, $J_n\to J$ where $J_n$, $J$ are symmetric open intervals such that $<-\beta,\beta>\subset J$, and if $g_n\in E_{\beta_n}(J_n)$, then
$g_{n'}\to g$ along a subsequence $(n')$ uniformly on compacts in $\Omega_g$, for some $g\in E_\beta(J)$.
\end{lemma}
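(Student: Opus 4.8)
\medskip
\noindent\emph{Proof plan.}
I would read the univalence condition in Definition \ref{d-eps} in the way that makes the statement correct, namely that $F^{-1}$ extends to a univalent map of $\C_J$ \emph{into itself}, $F^{-1}(\C_J)\subset\C_J$ (the standard normalization of the Epstein class), and then settle the three parts in turn. For part 1a the plan is simply to unwind the definition: put $W:=F^{-1}(\C_J)\subset\C_J$, let $F\colon W\to\C_J$ be the inverse biholomorphism, and set $\Omega=\Omega_g:=g^{-1}(\C_J)=Q^{-1}(W)$. Then $g=F\circ Q\colon\Omega\to\C_J$ is proper of degree $\deg Q=2$ (pre-images of compacta are compact, factoring through $Q$ and the biholomorphism $F$), with $g'=(F'\circ Q)Q'$ vanishing only at $0\in\Omega$ (since $0=F^{-1}(g(0))$ and $g(0)\in[-1,1]\subset\C_J$). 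Applying Schwarz reflection to $F^{-1}$ (univalent and real on $J$) gives $F^{-1}(\bar z)=\overline{F^{-1}(z)}$, hence $W=\overline W$; since $Q(-z)=Q(z)$ and $Q(\bar z)=\overline{Q(z)}$, the set $\Omega=Q^{-1}(W)$ is symmetric in $\R$, in $i\R$ and in the origin. The normalizations $F^{-1}(1)=1$, $F^{-1}(g(0))=0$, $F^{-1}(J)\subset\R$ and $F^{-1}(\H^{\pm})\subset\H^{\pm}$ are read off from $F$ being an increasing diffeomorphism $[0,1]\to[g(0),1]$ together with reflection and orientation on $\R$. As $\C_J$ is simply connected so is $W$, and $W$ contains the critical value $0$ of $Q$, so by Riemann--Hurwitz $\Omega=Q^{-1}(W)$ is connected and simply connected. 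The nesting $g^{-(i+1)}(\C_J)\subset g^{-i}(\C_J)$ follows by induction from $g^{-1}(\C_J)=\Omega\subset Q^{-1}(\C_J)\subset\C_J$ (the last inclusion because $Q^{-1}(\C_J)\cap\R=(-\sqrt b,\sqrt b)\subset(-b,b)=J$ for $J=(-b,b)$, $b>1$), and the same Riemann--Hurwitz pull-back keeps each $g^{-i}(\C_J)\supset[-1,1]\ni g(0)$ connected and simply connected. Finally $\Omega\cap\R=\{x:x^2\in F^{-1}(J)\}$; since $g(0)$ is interior to $J$, the value $0=F^{-1}(g(0))$ is interior to the interval $F^{-1}(J)=:(p,q)$, so $p<0<q$, and $\Omega\cap\R=(-\sqrt q,\sqrt q)=:J'$ satisfies $[-1,1]\subset J'\subset J$ (using $q\le b<b^2$) with $g(J')=[g(0),b)\subset J$.

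Parts 1c and 1b are then short. For 1c: $g'(1)=2F'(1)=2/(F^{-1})'(1)$, and since $F^{-1}\colon\C_J\to\C_J$ is holomorphic and fixes the interior point $1$, the Schwarz--Pick lemma gives $0<(F^{-1})'(1)\le1$ (equality only for $g=Q$); hence $g'(1)\ge2>1$. For 1b: $g$ has $0$ as its only critical point, so by Fatou's theorem any attracting or parabolic cycle attracts $0$, which therefore lies in the associated immediate basin. (Alternatively one notes that $F^{-1}$ univalent on $\C_J$ and real on $J$ forces a nonnegative Schwarzian for $F^{-1}$ there, hence a nonpositive one for $F$ and a negative one for $g$ off $0$, and applies Singer's theorem to $g\colon[-1,1]\to[-1,1]$, whose only turning point is $0$ and whose boundary fixed point $1$ is repelling by 1c.)

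Parts 2 and 3 are the real content, and I would prove them by normal families. For part 2, write $g_n=F_n\circ Q$; then $\{F_n^{-1}\}$ is a family of holomorphic self-maps of the \emph{hyperbolic} domain $\C_J$ (it omits two rays), hence normal by Montel. Passing to a subsequence with $g_{n_j}(0)\to c\in[-1,0]$ and $F_{n_j}^{-1}\to\Phi$ locally uniformly, one has $\Phi(1)=1$ and, since $F_{n_j}^{-1}(g_{n_j}(0))\equiv0$, also $\Phi(c)=0$; thus $\Phi$ is non-constant, hence univalent by Hurwitz, real on $J$, and maps $\C_J$ into itself. With $F:=\Phi^{-1}$ and $g:=F\circ Q$ one checks clause by clause that $g\in E_1(J)$, and $F_{n_j}\to F$ locally uniformly on $W=F^{-1}(\C_J)$, so $g_{n_j}\to g$ locally uniformly on $\Omega_g=Q^{-1}(W)$, using that the images $W_{n_j}$ and the domains $\Omega_{n_j}$ converge to $W$ and $\Omega_g$ in the sense of Carath\'eodory. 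Part 3 is the same, now with the slit planes $\C_{\beta_n^{-1}J_n}$ varying: they converge to $\C_{\beta^{-1}J}$, and their images $F_{1,n}^{-1}(\C_{\beta_n^{-1}J_n})$ lie eventually in a single hyperbolic slit plane $\C_{\widehat J}$ (any $\widehat J$ with $\overline{\beta^{-1}J}\subset\widehat J$), so normality persists; the subsequential limit lies in $E_1(\beta^{-1}J)$, and conjugating by $\beta$ produces $g\in E_\beta(J)$ with $g_{n'}\to g$ on $\Omega_g$.

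The step I expect to be the real work is the Carath\'eodory bookkeeping in Parts 2--3 --- tracking the moving image domains $W_n$, the moving domains of definition $\Omega_n$, and the moving slit planes in Part 3, and then verifying that the subsequential limit meets every clause of the definition of $E_1$. The remaining ingredients (Schwarz reflection, Riemann--Hurwitz, Schwarz--Pick, Montel, Hurwitz) are soft.
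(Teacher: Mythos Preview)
Your proposal rests on an extra hypothesis that you add to Definition~\ref{d-eps}: that $F^{-1}$ maps $\C_J$ \emph{into itself}. The paper does not assume this, and the lemma is true without it, so your opening clause (``the way that makes the statement correct'') is misleading. With your extra assumption your arguments are valid and pleasantly short: 1c via Schwarz--Pick at the interior fixed point $1$ gives $g'(1)\ge 2$, and Part~2 via Montel for self-maps of a hyperbolic domain gives normality for free. But under the paper's weaker definition both of these steps fail: nothing prevents $F^{-1}(J)$ from sticking out of $J$ (only $\Omega_g\cap\R=J'\subset J$ is assumed, which is $\sqrt{q}\le b$, not $q\le b$), so Schwarz--Pick on $\C_J$ is unavailable, and the $F_n^{-1}$ are merely univalent on $\C_J$, not self-maps, so your Montel argument does not apply.

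The paper handles these two points by genuinely different devices. For 1c it argues dynamically: if $g'(1)\le 1$ then by 1b the critical point $0$ lies in the immediate basin of the fixed point $1$, but since $g(0)<0$ there must be a fixed point in $(0,1)$, which blocks $0$ from that basin --- a contradiction. For Part~2 it renormalizes $B_n(w)=F_n'(1)\bigl(F_n^{-1}(w+1)-1\bigr)$, so that $B_n(0)=0$, $B_n'(0)=1$, and uses compactness of normalized univalent maps on the fixed domain $\C_{J-1}$; the crucial extra observation is that $B_n(w_n)=-F_n'(1)$ with $w_n=g_n(0)-1\in[-2,-1]$, so along a subsequence $F_{n_j}'(1)\to -B(w_0)\neq 0$ (univalence of the limit $B$ and $w_0\neq 0$), which rules out degeneration and yields convergence of $F_{n_j}^{-1}$ itself. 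Your two-point non-constancy idea ($\Phi(1)=1$, $\Phi(c)=0$) is the same in spirit, but the paper has to work harder to get normality in the first place. Your treatment of 1a and 1b is essentially the paper's, modulo the same reliance on the extra inclusion for the chain $\Omega\subset Q^{-1}(\C_J)\subset\C_J$; the paper instead gets $\Omega\subset\C_J$ directly from $\Omega\cap\R=J'\subset J$.
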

\begin{proof}
1a. $\Omega_g=Q^{-1}(F^{-1}(C_J))$ is connected simply connected because the critical value $g(0)\in\C_{J}$.
As $\Omega_g\cap\R=g^{-1}(J)\cap\R=J'\subset J$, then $\Omega_g\subset\C_J$. By this and
since $g^i(0)\in [-1,1]\subset\C_J$ for all $i\ge 0$, $g^{-i}(\C_J)$ is a decreasing sequence of connected simply connected domains.

1b: as $\Omega_g\subset\C_J$, the proof is a repetition of classical arguments, see e.g. \cite{Mib}.

1c: assume $0\le g'(1)\le 1$. $g'(1)=0$ is ruled out as $g:[-1,1]\to[-1,1]$ is unimodal. If $g'(1)\le 1$ then, by 1b, $0$ must belong to a real open subinterval $A$ of $[-1,1]$ with an end point $1$ such that $g^k(x)\to 1$ as $k\to\infty$
for all $x\in A$.
Hence, $g(0)\neq 0$, that is, $g(0)<0$. But then $g$ must have a fixed point $b\in (0, 1)$. As $b\notin A$, the interval $A$ cannot contain $0$, a contradiction.

2. Define a sequence of univalent in $\C_{J-1}$ maps as follows: $B_n(w)=F_n'(1)(F_n^{-1}(w+1)-1)$.
(Here and below $A+a=\{x+a: x\in A\}$, in particular, $J-1$ is an open interval containing $[-2,0]$.) Then $B_n(0)=0, B_n'(0)=1$. This normalised  family of univalent maps
$B_n: \C_{J-1}\to\C$ is a compact family w.r.t. uniform converges on compact sets. Note also that
$B_n(w_n)=-1$ where $w_n=g_n(0)-1\in [-2,-1]$. Choose a subsequence such that $w_{n_j}\to w_0\in [-2,-1]$ and $B_{n_j}\to B$ uniformly on compacts where $B$ is univalent on $\C_{J-1}$. Since $w_0\neq 0$, then $B(w_0)\neq 0$. Therefore, $B_{n_j}(w_{n_j})=-F_{n_j}'(1)\to
B(w_0)\neq 0$, thus $F_{n_j}^{-1}(z)\to F_*^{-1}:=1-B(w-1)/B(w_0)$ uniformly on compacts in $\C_J$.
Let's show that $g_*:=F_*\circ Q\in E_1(J)$. Indeed, for each $n$, $J_n'=Q^{-1}(F_n^{-1}(J))\cap \R=Q^{-1}(F_n^{-1}(J_n\cap [g_n(0), +\infty)))\subset J_n$, hence, passing to a limit, $J':=Q^{-1}(F_*^{-1}(J))\cap \R\subset J$ and is symmetric w.r.t. $0$,
and $g_*:J'\to J$ is unimodal. Also, $0\ge g_{n_j}(0)\to g_*(0)$ and $g_*(-1)=g_*(1)=F_*(1)=1$. Thus $g_*\in E_1(J)$.
The proof of 3. is a minor variation of the above.
\end{proof}
\section{Towers of maps from the Epstein class}\label{s-towers}
\begin{theorem}\label{t-m}
Let $\mathcal{G}=\{g_n\}_{n=0}^\infty$ where $g_n\in E_{\beta_n}(J_n)$, $n=0,1,2,...$, and $\beta_n$, $J_n$ are as follows:
\begin{enumerate}\label{e-t}
\item $\beta_0=1$, $|\beta_n|<|\beta_{n+1}|$ for $n=0,1,2,,,$, and $\beta_n\to\infty$,
\item if $I_n=<-\beta_n,\beta_n>$, then $I_n\subset J_n\subset I_{n+1}$, $n=0,1,2,...$,
\item for every $n>0$ there exists an integer $p_n\ge 2$, such that
$$g_0(x)=g_n^{p_n}(x), \mbox{ for all } x\in [-1,1].$$
\end{enumerate}
Then there exists a simply connected neighborhood $V\subset C_{J_0}$ of $[-1,1]$, such that $\overline{V'}\subset V$ where $V'=g_0^{-1}(V)$.
In other words, $g_0: V'\to V$ is a PL restriction of the map $g_0:\Omega_{g_0}\to \C_{J_0}$.

Moreover, assuming
\begin{equation}\label{label-m}
K_1\le \frac{|J_n|}{|I_n|}\le K_2, \
|I_n|\le\Lambda^n, \ p_n\le N^n,
\end{equation}
for some $1<K_1<K_2<\infty$, $\Lambda>1$, $N\ge 2$ and all $n$, the following uniform bounds holds:
there exists $m=m(K_1,K_2,\Lambda,N)>0$
such that $\mod(V\setminus \overline{V'})\ge m$, for any $\mathcal{G}$ as above and for a choice of the domain $V$.
\end{theorem}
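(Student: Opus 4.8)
The plan is to prove the qualitative statement — that $\mathcal{J}$, the closure of the minimal completely invariant set $T \supset [-1,1]$ for $g_0 : \Omega_{g_0} \to \C_{J_0}$, is a proper subset of $\C_{J_0}$ — by contradiction, and then to invoke Theorem \ref{t-bi} applied to the topological hull $\widehat{\mathcal{J}}$ to get the PL restriction $g_0 : V' \to V$. For the uniform modulus bound I would run a compactness argument over the space of all towers $\mathcal{G}$ satisfying \eqref{label-m}, using the compactness of the Epstein classes (Lemma \ref{l-eps}, parts 2 and 3).

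\textbf{Step 1: properness of $\mathcal{J}$ — the tower side.} Assume $\mathcal{J} = \C_{J_0}$, i.e. $T$ is dense in the slit plane. First I would show (this part uses no combinatorial bound) that $\mathcal{J}$ must then meet the slits $\R \setminus J_0$: an unbounded, fully invariant closed set for a degree-two Epstein map cannot avoid the slits, since $g_0^{-1}$ of a small disk around a slit endpoint lands, by the $F^{-1}$-univalence on $\C_{J_0}$ and the $z \mapsto z^2$ structure, in a region forcing accumulation on the real axis outside $J'_0$. Now bring in the tower: by hypothesis (3), $g_0 = g_n^{p_n}$ on $[-1,1]$, and each $g_n \in E_{\beta_n}(J_n)$ with $\beta_n \to \infty$ and $J_n \to \R$. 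Exactly as in the discussion preceding the notations, the tower structure lets one analytically continue $g_0 : \Omega_{g_0} \to \C_{J_0}$: because $g_0$ factors through $g_n$ whose own domain $\C_{J_n}$ is a much larger slit plane (slits receding to infinity), one extends $g_0$ to a holomorphic map on a simply connected domain strictly larger than $\C_{J_0}$ — in the spirit of \cite{EL}. But a completely invariant set cannot be dense in the domain of a map that extends past that domain's boundary while keeping the dynamics compatible: the extension would force $\mathcal{J}$ to be proper, contradicting $\mathcal{J} = \C_{J_0}$. So $\mathcal{J} \subsetneq \C_{J_0}$.

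\textbf{Step 2: from properness to a PL restriction.} With $\mathcal{J} \subsetneq \C_{J_0}$, form $\widehat{\mathcal{J}}$, its topological hull; it is compact, full, and (by Remark \ref{r-bi}) connected, it is $g_0$-completely invariant, and it contains the critical point $0$. Apply Theorem \ref{t-bi} with $U_{-1} = \Omega_{g_0}$, $U_0 = \C_{J_0}$, $X = \widehat{\mathcal{J}}$, $d = 2$: this yields a polynomial-like restriction $g_0 : V' \to V$ with $V', V$ neighborhoods of $\widehat{\mathcal{J}}$, and in particular $\overline{V'} \subset V$, $V' = g_0^{-1}(V)$ after shrinking $V$ so that $V \subset \C_{J_0}$ and $V$ is simply connected. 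Since $[-1,1] \subset T \subset \widehat{\mathcal{J}}$, this is the desired neighborhood of $[-1,1]$.

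\textbf{Step 3: the uniform bound.} Here I would argue by contradiction plus compactness. Suppose there is a sequence of towers $\mathcal{G}^{(k)}$ satisfying \eqref{label-m} with the same constants $K_1, K_2, \Lambda, N$, but for which no choice of $V$ gives $\mod(V \setminus \overline{V'}) \ge 1/k$. Look at the bottom maps $g_0^{(k)} \in E_1(J_0^{(k)})$; the constraint $K_1 \le |J_0^{(k)}|/|I_0^{(k)}| = |J_0^{(k)}|/2 \le K_2$ keeps $J_0^{(k)}$ in a compact family of intervals, so by Lemma \ref{l-eps}(2) a subsequence of $g_0^{(k)}$ converges uniformly on compacts to some $g_0 \in E_1(J_0)$. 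Passing to further subsequences and using Lemma \ref{l-eps}(3) together with the uniform bounds $|\beta_n^{(k)}|$ controlled by $\Lambda^n$ and $p_n^{(k)} \le N^n$, one extracts a limiting tower $\mathcal{G}$ satisfying (1)–(3) and \eqref{label-m}. By Steps 1–2 the limit $g_0$ admits a PL restriction $g_0 : V' \to V$ with $\mod(V \setminus \overline{V'}) = m_0 > 0$. Since PL maps are open conditions stable under $C^0$-small perturbations (convergence is uniform on compacts in $\Omega_{g_0}$), the same $V$ works for all large $k$ with modulus $\ge m_0/2 > 1/k$, contradicting the choice of $\mathcal{G}^{(k)}$. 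Hence $m(K_1, K_2, \Lambda, N) := \inf$ over all such towers of the best modulus is strictly positive.

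\textbf{Main obstacle.} The crux is Step 1 — producing the analytic continuation of $g_0$ beyond $\C_{J_0}$ from the tower hypothesis and extracting the contradiction with density of $T$. The tower gives $g_0 = g_n^{p_n}$ only on $[-1,1]$, not globally, so one must track how the composition's natural domain of analyticity, governed by $g_n \in E_{\beta_n}(J_n)$ with $J_n \nearrow \R$, genuinely exceeds $\C_{J_0}$, and then show that a dense completely invariant set is incompatible with that extra room — this is where the real bounds feeding into \eqref{label-m} and the slit geometry interact most delicately.
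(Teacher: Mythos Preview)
Your overall architecture matches the paper's exactly: define $T=\bigcup_{i\ge 0} g_0^{-i}([-1,1])$, $\mathcal{J}=\overline{T}$, show $\mathcal{J}$ is compactly contained in $\C_{J_0}$, pass to the hull $\widehat{\mathcal{J}}$, apply Theorem~\ref{t-bi}, and get the uniform bound by compactness of towers. Steps~2 and~3 are essentially what the paper does (you should still note that ``$\mathcal{J}\subsetneq\C_{J_0}$'' is not the same as ``$\mathcal{J}$ compact in $\C_{J_0}$'', and that the complete invariance of $\widehat{\mathcal{J}}$ needs a short separate check --- Lemma~\ref{l-compl-inv} in the paper).

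The real gap is the mechanism of the contradiction in Step~1. Your heuristic --- ``a completely invariant set cannot be dense in the domain of a map that extends past that domain's boundary'' --- is not an argument, and in fact the paper does \emph{not} argue this way. What actually happens is more concrete and has three pieces you are missing:
\begin{itemize}
\item[(a)] If $\mathcal{J}$ is not compactly in $\C_{J_0}$, then using Poincar\'e neighborhoods $D((-1,1),\theta)$ and Proposition~\ref{p-ls} one produces a sequence $z_n\in T\cap\H^+$ with $z_n\to x_*$ for some \emph{real} $x_*\in(1,\infty)$ (not necessarily on the slit; this is Lemma~\ref{l-alt}). Your claim that $\mathcal{J}$ ``must meet the slits'' is both imprecise and not what is needed.
\item[(b)] The tower is used not to enlarge the \emph{target} but to show that $g_0=g_n^{p_n}$ holds on all of $\Omega_0$ and that $g_0$ extends holomorphically across every real point (Lemmas~\ref{l-ext}, \ref{l-inf}). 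From this one describes $\partial\Omega_0$ explicitly (Lemma~\ref{l-domega}) and concludes $\overline{\Omega_0}\cap\R=[-w,w]$ for some finite $w>1$ with $g_0(w)>w$.
\item[(c)] The contradiction is then purely real-dynamical: $g_0:[1,w]\to[1,g_0(w)]$ is a diffeomorphism, $1$ is repelling, and the Epstein structure rules out any other fixed point in $(1,w]$, so every point of $(1,w]$ eventually leaves $(1,w]$ under iteration. Hence $g_0^k(x_*)>w$ for some $k$, while $g_0^k(z_n)\in T\subset\Omega_0\subset\{|\Re z|\le w\}\cup(\C\setminus\R)$ and $g_0^k(z_n)\to g_0^k(x_*)$; this forces a real accumulation point $>w$ of $\overline{\Omega_0}$, contradicting (b).
\end{itemize}
You correctly flagged this as the ``main obstacle'', but your proposed resolution (density incompatible with extension) does not survive contact with the details; the escape-from-$(1,w]$ argument is the missing idea.
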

The proof will be given in Sect. \ref{s-m}.
\begin{remark}\label{r-tower}
In our application of Theorem \ref{t-m}, $p_{n+1}/p_n\ge 2$ and are integers (where $p_0=1)$, moreover, $g_n=g_{n+1}^{p_{n+1}/p_n}$ on $I_n$, $n=0,1,...$.
\footnote{It seems, the converse is also true: under the conditions of Theorem \ref{t-m}, $p_{n+1}/p_n\ge 2$ and are integers and $g_n=g_{n+1}^{p_{n+1}/p_n}$ on $I_n$, but we don't need this for the proof.}
In other words, $\mathcal{G}$ is a one-sided infinite tower of maps from the Epstein class. Cf. \cite{mcm1}
where the notion of tower in the context of PL maps was introduced and greatly developed. Theorem \ref{t-m} claims that
a tower of Epstein's maps is a tower of PL maps.
\end{remark}
Now we associate to every infinitely renormalizable real quadratic polynomial with bounded combinatorics a tower as above, see Proposition \ref{p-infren-tower}. The main ingredient is the following real bounds for such maps.
First, let $I=<-\beta,\beta>$ be a periodic interval of period $q>1$ of a real quadratic map $f$ where $\beta=\beta_q$ is a fixed point of $f^q$.
Then $\beta>0$ if $0$ is a local minimum of $f^q$ and $\beta<0$ if $0$ is local maximum of $f^q$.
We always assume that $0\in f^q(I)$ (high return, see Remark \ref{r-high}), which implies that $(f^q)'(\beta)>1$.
Let $W_c$ be the maximal interval that contains $f(I)\ni c$ such that $f^{q-1}:W_c\to W$ is a homeomorphism onto its image $W=W(I)$.
Then $I\subset W$,
Let $J=(-j,j)\subset W$ be a symmetric interval containing $I$.
It is easy to see that $f^q\in E_\beta(J)$, moreover, if $f^q: J'\to J$ is the unimodal extension then $\overline{J'}\subset J$ (and not only $J'\subset J$).
Indeed, let $\hat h: J\to J_c\subset W_c$ be the inverse branch of $f^{q-1}$.
Let $\hat J'=\hat J-c$.
Then $F_J^{-1}:=\hat h-c: J\to \hat J'$ is a diffeomorphism which extends to a univalent function on $\C_J$.
Therefore,
$f^q=(f^{q-1}+c)\circ Q=F_J\circ Q$ on the interval $J':=Q^{-1}(\hat J')\cap\R$.
To show that $\overline{J'}\subset J$, assume the contrary, i.e., $j\le j'$ where $J'=(-j', j')$, and let, for definiteness, $\beta>0$. Then $f^q([\beta,j']=[\beta, j]\subset [\beta,j']$. Since $f^q$ is monotone increasing on $[0, j']$ and $(f^q)(\beta)>1$, then there must exist another fixed point $b\in(\beta, j]$
which attracts its left semi-neighborhood $(b-\epsilon, b)$ and $(f^q)'(b)\in (0,1]$. Then $b$ is a limit point of iterates of $0$, hence,
some iterate $b_0$ of $b$ lies in $I$ and also a fixed point of $f^q$ with the multiplier in $(0,1]$, a contradiction.
Thus indeed $\overline{J'}\subset J$ and $f^q\in E_\beta(J)$.

Let $f$ be infinitely renormalizable with bounded (by $N$) combinatorics and the sequence of corresponding periodic intervals $I(q_n)=<-\beta_{q_n},\beta_{q_n}>$, $n=1,2,...$.
\begin{remark}\label{r-high}
As all periodic points of $f$ are repelling, the return is high: $0\in f^{q_n}(I(q_n))$, for every $n$.
\end{remark}
For each $n$, let $W_n\supset I(q_n)$ be the open interval as above. Note that $I(q_n)\subset W_n\subset I(q_{n-1})$.

Given a real interval $A\ni 0$ and a number $t>1$, let $t.A=\{tx: x\in A\}$, the $(t-1)$-neighborhood, or $t$-enlargement of $A$.
\begin{prop}\label{p-real} (Real bounds \cite{Su1}, \cite{MS})
There exist an absolute constant $L>1$ and some $1<\mu<\lambda<\infty$ which depend only on $N$, such that for every $n\ge n(f)$:
(a) if $J(q_n)=L.I(q_n)$, then $J(q_n)\subset W_n$,
(b) $\mu\le I(q_{n})/I(q_{n+1})\le \lambda$.
\end{prop}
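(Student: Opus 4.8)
\medskip
\noindent\emph{Proof plan.}
Both bounds are instances of Sullivan's classical ``real bounds''; here is the route I would follow. The soft fact driving everything is that $f(x)=x^{2}+c$ has negative Schwarzian derivative, a property shared by every iterate $f^{k}$ and unchanged by pre- or post-composition with a M\"obius map; hence the real Koebe principle applies. In the form needed here it reads: if a diffeomorphic branch $f^{k}|_{T}$ decomposes as a composition of $k$ maps whose ranges have pairwise disjoint interiors and lie in a fixed bounded interval, then for any $M\Subset T$ each of whose two complementary components in $T$ has length $\ge\tau|M|$, the distortion of $f^{k}|_{M}$ is bounded by a constant depending only on $\tau$ (tending to $1$ as $\tau\to\infty$), \emph{with no dependence on $k$} --- the length of the orbit is traded for disjointness via monotonicity of cross-ratios. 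We shall also use that all cycles of $f$ are repelling, so that every return is high (Remark~\ref{r-high}); Lemma~\ref{l-eps}(1b)--(1c); and the observation recorded just before Proposition~\ref{p-real} that $f^{q}\in E_{\beta_{q}}(J)$ for every symmetric $J$ with $I(q)\subset J\subset W(I(q))$, with $\overline{J'}\subset J$ for the unimodal extension.

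\smallskip
\noindent\emph{Step 1: assertion (a), with $L$ absolute.}
Fix $n$ and put $q=q_{n}$, $\Delta=I(q_{n})$, $\Delta_{j}=f^{j}(\Delta)$. By the combinatorics of renormalization $\Delta_{0},\dots,\Delta_{q-1}$ have pairwise disjoint interiors and only $\Delta_{0}$ contains the critical point, so $f^{q-1}:\Delta_{1}\to\Delta_{0}$ is a diffeomorphism extending to the maximal one $f^{q-1}:W_{c}\to W$; it is a composition of branches of $f$ with disjoint ranges inside the dynamical interval of $f$, so the Koebe principle in the form above controls its distortion on any subinterval of $W_{c}$ with definite complementary space, uniformly in $q$. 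Assume now, for contradiction, that one component of $W\setminus\Delta$ has length $<(L-1)|\Delta|/2$. Transporting the repelling fixed point $\beta\in\partial\Delta$ of $f^{q}$ (where $(f^{q})'(\beta)>1$) together with the relation $\overline{J'}\subset J$ through the even map $f^{q}$, and invoking the minimum principle for negative-Schwarzian maps, one locates inside the thin gap between $\Delta$ and $\partial W$ either a fixed point of $f^{q}$ of multiplier in $(0,1]$, or an attracting/parabolic cycle whose immediate basin --- by Lemma~\ref{l-eps}(1b) --- contains the critical point. Both contradict that all cycles of $f$ are repelling, and quantifying this dichotomy produces an absolute $L>1$. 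No condition on the combinatorics is used here.

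\smallskip
\noindent\emph{Step 2: assertion (b), with $\mu,\lambda$ depending only on $N$.}
Here I would run an induction on $n$, with (a) as the link between consecutive levels. Given (a) at level $n$, rescaling $f^{q_{n}}$ by $\beta_{q_{n}}$ turns $R^{q_{n}}f$ into a map of class $E_{1}\!\big((-L,L)\big)$ uniformly in $n$ (since $f^{q_{n}}\in E_{\beta_{q_{n}}}(J(q_{n}))$ and $J(q_{n})=L\cdot I(q_{n})$), so by Lemma~\ref{l-eps}(2) the family $\{R^{q_{n}}f\}_{n\ge n(f)}$ is precompact. Moreover $I(q_{n+1})/\beta_{q_{n}}$ is a periodic interval of $R^{q_{n}}f$ of period $a_{n+1}=q_{n+1}/q_{n}\in\{2,\dots,N\}$ with a high return. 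If (b) failed for some $N$, there would be a subsequence $n_{k}$ along which $|I(q_{n_{k}})|/|I(q_{n_{k}+1})|$ tends to $1$ or to $\infty$; passing to a further subsequence, $a_{n_{k}+1}\equiv a\in\{2,\dots,N\}$ and $R^{q_{n_{k}}}f\to g_{\ast}\in E_{1}((-L,L))$ uniformly on compacts, and the rescaled intervals converge to a forward-invariant interval $\Delta_{\ast}\ni 0$ of $g_{\ast}^{\,a}$. The degenerate case $|\Delta_{\ast}|=0$ would endow $g_{\ast}^{\,a}$ with a neutral or attracting fixed point, contradicting Lemma~\ref{l-eps}(1b)--(1c) for $g_{\ast}$ (a limit of maps with only repelling cycles, hence itself with only repelling cycles), so the ratio stays bounded; and $\Delta_{\ast}=[-1,1]$ would force the endpoint of $\Delta_{\ast}$, a $g_{\ast}$-periodic point of period $a\ge 2$, to coincide with the fixed point $1$ of $g_{\ast}$, which is absurd, so the ratio stays bounded away from $1$. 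As only the finitely many periods $2,\dots,N$ occur, this yields $1<\mu<\lambda<\infty$ depending only on $N$; feeding (b) back into Step~1 closes the induction for (a) as well.

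\smallskip
\noindent\emph{Main obstacle.}
The crux is Step~1: producing genuinely definite space $W_{n}\supset L\cdot I(q_{n})$ with $L$ independent of both $n$ and $N$. Distortion estimates cannot be propagated naively through the long branch $f^{q_{n}-1}$; the key is the $k$-independent, ``disjointness'' form of the Koebe principle. The secondary difficulty is the fold: $f^{q_{n}}$ is even, so one must upgrade one-sided control past $\beta_{q_{n}}$ to the \emph{symmetric} enlargement $L\cdot I(q_{n})$, which is precisely where the minimum principle and the relation $\overline{J'}\subset J$ enter. Granted Step~1, Step~2 is a soft consequence of compactness of the Epstein class (Lemma~\ref{l-eps}).
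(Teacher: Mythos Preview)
The paper does not prove Proposition~\ref{p-real}: it is quoted from \cite{Su1}, \cite{MS} as a known input, so there is no ``paper's own proof'' to compare against. That said, your sketch has a genuine gap in Step~1.

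Your contradiction argument for (a) conflates two different statements. The mechanism you invoke --- ``a thin gap forces a fixed point of $f^{q}$ of multiplier in $(0,1]$'' --- is exactly the argument the paper gives \emph{just before} Proposition~\ref{p-real} to show $\overline{J'}\subset J$, but that argument already \emph{assumes} $J\subset W$ and works inside the unimodal window; it does not show that $W$ itself is large. The endpoints of $W$ are not periodic points: by maximality of $W_c$, an endpoint $w_c$ of $W_c$ satisfies $f^{k}(w_c)=0$ for some $0\le k\le q-2$, so the corresponding endpoint of $W$ equals $f^{q-1}(w_c)=f^{q-1-k}(0)$, a point on the \emph{critical orbit}. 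A thin component of $W\setminus\Delta$ therefore only says that some $f^{j}(0)$ lies close to $\partial\Delta$; no attracting or parabolic cycle is produced, and neither the minimum principle nor the relation $\overline{J'}\subset J$ manufactures one. The classical route to (a) (Sullivan, de~Melo--van~Strien) is different: one observes that each component of $W_c\setminus\Delta_1$ is mapped diffeomorphically by the appropriate $f^{k}$ onto an interval reaching $0$, hence containing a full half of $\Delta_0$; the Macroscopic Koebe Principle --- in the disjoint-orbit form you correctly state --- then transports this definite space to $W\setminus\Delta_0$. This is where the ``smallest interval'' trick enters and what makes $L$ absolute.

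In Step~2 the compactness strategy is reasonable once (a) is in hand, but the justification ``a limit of maps with only repelling cycles, hence itself with only repelling cycles'' is false in general: parabolic and even super-attracting cycles can appear in such limits. The correct contradiction in the degenerate case $|\Delta_*|=0$ is that the repelling multiplier $\big((R^{q_{n_k}}f)^{a}\big)'(b_k)>1$ at the shrinking endpoint $b_k\to 0$ must converge, by $C^{1}$-convergence on compacts, to $(g_*^{\,a})'(0)=0$. Finally, since (a) is what places $R^{q_n}f$ in the compact family $E_1((-L,L))$ to begin with, you must establish (a) outright before running Step~2; the phrase ``feeding (b) back into Step~1 closes the induction'' introduces a circularity that should be removed --- in the classical proof, (a) is proved first and unconditionally.
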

Note that these real bounds are known to hold in a much more generality, see \cite{Su1}, \cite{MS}, in particular, for infinitely renormalizable maps of bounded combinatorics from the Epstein class.

By Proposition \ref{p-real} and the preceding discussion,
$$\tilde f_n:=f^{q_n}|_{I(q_n)}\in E_{\beta_n}(J(q_n)).$$
\begin{prop}\label{p-infren-tower}
To every sequence $\mathcal{R}\subset\N$ one can associate its subsequence $S$ and a sequence of maps
$g_m\in E_{b_m}(J_m)$, where $J_m=(-L|b_m|, L|b_m|)$, $m=0,1,2,...$, as follows:
\begin{enumerate}\label{e-t-subs}
\item $b_0=1$, i.e., $g_0\in E_1(J_0)$, and the sequence of renormalizations $\{R^{q_j}f\}_{j\in S}\to g_0$ uniformly on compacts in $\Omega_{g_0}$,
\item if $I_m=<-b_m,b_m>$, then $\mu\le \frac{|I_{m+1}|}{|I_m|}\le \lambda$ and $J_m\subset I_{m+1}$, $m=0,1,2,...$,
\item for every $m>1$ there exists an integer $p_m\ge 2$, such that
$$g_0(x)=g_m^{p_m}(x), \mbox{ for all } x\in [-1,1].$$
\end{enumerate}
\end{prop}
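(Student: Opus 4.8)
\emph{Strategy.} The plan is to obtain the tower as a subsequential limit of finite ``approximate towers'' built out of rescaled iterates of $f$, using the real bounds (Proposition~\ref{p-real}) to keep all scales comparable and the compactness of the Epstein class (Lemma~\ref{l-eps}) to pass to the limit.

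\emph{The approximate towers.} Fix a large $j\in\mathcal R$ and let $\phi_j(x)=\beta_{q_j}x$, so that $R^{q_j}f=\phi_j^{-1}\circ f^{q_j}\circ\phi_j$ on $[-1,1]$. Since the renormalization periods are nested, $q_{j-m}\mid q_j$ (a standard property of renormalization, see \cite{MS}), one has $f^{q_j}=(f^{q_{j-m}})^{p^{(j)}_m}$ with
\[
p^{(j)}_m:=q_j/q_{j-m}=\prod_{i=j-m+1}^{j}a_i\in\{2^m,\dots,N^m\},
\]
an integer $\ge 2$ for $m\ge 1$. For $m\ge 0$ set $g^{(j)}_m:=\phi_j^{-1}\circ f^{q_{j-m}}\circ\phi_j$, $b^{(j)}_m:=\beta_{q_{j-m}}/\beta_{q_j}$, and $I^{(j)}_m:=\phi_j^{-1}(I(q_{j-m}))=<-b^{(j)}_m,b^{(j)}_m>$. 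Since $\tilde f_{j-m}=f^{q_{j-m}}|_{I(q_{j-m})}\in E_{\beta_{q_{j-m}}}(J(q_{j-m}))$ with $J(q_{j-m})=L.I(q_{j-m})$, conjugating by $\phi_j$ gives
\[
g^{(j)}_m\in E_{b^{(j)}_m}\big(J^{(j)}_m\big),\qquad J^{(j)}_m:=\big(-L|b^{(j)}_m|,\,L|b^{(j)}_m|\big),
\]
with $b^{(j)}_0=1$ and $g^{(j)}_0=R^{q_j}f\in E_1((-L,L))$; moreover, conjugation distributes over composition, so $g^{(j)}_0=(g^{(j)}_m)^{p^{(j)}_m}$ on $[-1,1]$. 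The real bounds give, for $j-m-1\ge n(f)$,
\[
\mu\le \frac{|b^{(j)}_{m+1}|}{|b^{(j)}_m|}=\frac{|I(q_{j-m-1})|}{|I(q_{j-m})|}\le\lambda ,\qquad J^{(j)}_m\subset I^{(j)}_{m+1},
\]
the containment because $L.I(q_{j-m})\subset W_{j-m}\subset I(q_{j-m-1})$; in particular $\mu^m\le|b^{(j)}_m|\le\lambda^m$.

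\emph{Passing to the limit.} Enumerate $\mathcal R$ and extract successively nested subsequences to get $S\subset\mathcal R$ along which, for every $m$: (i) $p^{(j)}_m$ is eventually a fixed integer $p_m$ (it lies in a finite set); (ii) $b^{(j)}_m\to b_m\ne 0$ (the $|b^{(j)}_m|$ are bounded away from $0$ and $\infty$, and the signs stabilize); (iii) by Lemma~\ref{l-eps}(3) (and Lemma~\ref{l-eps}(2) for $m=0$) the maps $g^{(j)}_m$ converge, uniformly on compacts of $\Omega_{g_m}$, to some $g_m\in E_{b_m}(J_m)$ with $J_m:=(-L|b_m|,L|b_m|)$. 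Then $b_0=1$ and $g_0=\lim_{j\in S}R^{q_j}f$, which is item~(1). Passing the displayed inequalities to the limit yields $\mu\le|I_{m+1}|/|I_m|\le\lambda$ and $J_m\subset I_{m+1}$, which is item~(2). Finally, $g_m$ maps the compact interval $I_m\subset\Omega_{g_m}$ into itself, so continuity of iteration under uniform convergence on $I_m$ gives $(g^{(j)}_m)^{p_m}\to g_m^{p_m}$ uniformly on $[-1,1]$; combined with $g^{(j)}_0=(g^{(j)}_m)^{p_m}$ on $[-1,1]$ and $g^{(j)}_0\to g_0$, this yields $g_0=g_m^{p_m}$ on $[-1,1]$ for all $m\ge 1$, which is item~(3).

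\emph{Where the difficulty lies.} The argument is entirely soft: beyond compactness of the Epstein class it uses only the real bounds and the nestedness $q_{n-1}\mid q_n$. The single point needing care is transporting item~(3) across the limit — one must first stabilize the integers $p^{(j)}_m$ and then use continuity of iteration on the compact invariant interval $I_m$ under uniform-on-compacts convergence of $g^{(j)}_m$. I do not anticipate a genuine obstacle. Note that the tower so produced automatically satisfies~\eqref{label-m}, since $|J_m|/|I_m|\equiv L$, $|I_m|\le 2\lambda^m$ and $p_m\le N^m$; hence Theorem~\ref{t-m} applies to $\mathcal G=\{g_m\}$ and, as sketched in the introduction, delivers Theorem~\ref{t-su}.
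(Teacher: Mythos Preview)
Your proof is correct and follows essentially the same approach as the paper's: build finite ``approximate towers'' by rescaling all the maps $f^{q_{j-m}}$, $m=0,1,\dots$, to the scale of the smallest interval $I(q_j)$, use the real bounds to keep the ratios $|b^{(j)}_m|$ and the exponents $p^{(j)}_m$ in fixed finite ranges, and then apply compactness of the Epstein class (Lemma~\ref{l-eps}(2)--(3)) together with Cantor's diagonal procedure to extract the limit tower. Your indexing (top level $j$, depth $m$) is a relabeling of the paper's ($m$ for the top, $n$ for the current level), and you are slightly more explicit than the paper about stabilizing the signs of $b^{(j)}_m$ and about why the relation $g_0=g_m^{p_m}$ survives the limit, but the argument is the same.
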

\begin{proof}
For a fixed $m>n(f)$, define a finite string of rescalings
$$\mathcal{G}_m=\{g_{m,n}=\beta_m^{-1}\circ \tilde f_n\circ \beta_m: n=m,m-1,...,n(f)\}.$$
In other words, we rescale all $\tilde f_n$, $n=m,m-1,...,n(f)$, by a single rescaling $z\mapsto \beta_m z$, which turns $I(q_m)$ into $[-1,1]$
so that $g_{m,m}=R^{q_m}f$.
Thus, if $J_{m,n}=\beta_m^{-1} J(q_n)$, $\beta_{m,n}=\beta_m^{-1}\beta_n$, then
$$g_{m,m}\in E_1(J_{m,m}), \ \ g_{m,m-k}\in E_{\beta_{m,m-k}}(J_{m,m-k}), \ k=1,...,m-n(f).$$
Denote $I_{m,n}=<-\beta_{m,n},\beta_{m,n}>$ and $J_{m,n}= L.I_{m,n}$. Then
$$I_{m,m}=[-1,1], \ I_{m,n}\subset J_{m,n}\subset I_{m,n-1}, n=m,m-1,...,m-n(f).$$
Recall that $a_n=q_n/q_{n-1}$. Hence,
$$g_{m,n}(x)=g_{m,n-1}^{a_n}(x), x\in I_{m,n},\mbox{ and }
g_{m,m}(x)=g_{m,n}^{p_{m,n}}(x), x\in I_{m,m}=[-1,1],
$$
where $p_{m,n}=\frac{q_m}{q_n}=a_m a_{m-1}...a_{n+1}, \ n=m-1,...,n(f)$. Also:
\begin{equation}\label{e-p}
2^{m-n}\le p_{m,n}\le N^{m-n},
\end{equation}
\begin{equation}\label{e-I}
\mu^{m-n}<|\beta_{m,n}|<\lambda^{m-n}, \mbox{ i.e., } \mu^{m-n}<|I_{m,n}|<\lambda^{m-n}.
\end{equation}
Fix any infinite sequence of indices $\mathcal{R}\subset\N$. Apply Lemma \ref{l-eps}(2) to a sequence $\{g_{m,m}\in E_1((-L,L)): m\in\mathcal{R}\}$ and find a converging subsequence $g_{m,m}\to g_0$ along some $\mathcal{R}_1\subset\mathcal{R}$,
where $g_0\in E_1((-L,L))$.
Then apply Lemma \ref{l-eps}(3) to $\{g_{m,m-1}\in E_{\beta_{m,m-1}}(J_{m,m-1}): m\in\mathcal{R}_1\}$ and find $\mathcal{R}_2\subset\mathcal{R}_1$ such that $\beta_{m,m-1}\to b_1$ and $p_{m,m-1}=p_1$
(which is possible in view of (\ref{e-p})-(\ref{e-I})), and
$g_{m,m-1}\to g_1$. Observe that $I_1:=<-b_1,b_1>\supset L.[-1,1]=[-L,L]$, $p_1\ge 2$, $g_1\in E_{b_1}((-L|b_1|, L|b_1|))$ and
$g_0=g_1^{p_1}$ on $[-1,1]$. Then consider $\{g_{m,m-1}\}_{m\in\mathcal{R}_2}$,
pass to a subsequence $\mathcal{R}_3\subset\mathcal{R}_2$, and so on. Cantor's diagonal procedure finishes the proof.
\end{proof}
\section{Preliminaries: Poincare's neighborhoods}\label{s-po}
As in \cite{Su1}, we use Schwartz's lemma for maps of the slit complex plane and the corresponding Poincare's neighborhoods
$D(A, \theta)$.
We state here some known related to this results which are used in the proof of Theorem \ref{t-m}.

Given an interval $A\subset\R$ and an angle $\theta\in (0,\pi)$, let
$D(A, \theta)$ be the union of two Euclidean disks, symmetric to each other w.r.t. $\R$, whose boundaries intersect at the boundary of $A$ at the external angle $\theta$ with the real axes.
This is the set of points of $\C_A$ whose distance to $A$ w.r.t. the hyperbolic metric of $\C_A$ is at most $k(\theta)=\log\tan(\pi/2-\theta/2)$.
Therefore, if $\Psi: \C_{A}\to \C_{A'}$ is univalent such that $\Psi(A)=A'$, then
$\Psi(D(B,\theta))\subset D(\Psi(B), \theta)$, for every subinterval $B\subset A$ and every angle $\theta\in (0,\pi)$.

The second result is the following general property which is proved in \cite{LS}, Lemma 15.1. Recall that $Q(z)=z^2$.
\begin{prop}\label{p-ls} (see \cite{LS})
Let $K > 1$. There exists $\theta_0=\theta_0(K)>0$ such that, for all $\theta\in (0,\theta_0)$ the boundaries of $Q(D((-1,1), \theta))$
and $D((-K, 1),\theta)$ intersect each other at a point $Z(K, \theta)\in\H^+$ and at its complex conjugate. Furthermore, $Z(K,\theta)\to K^2$ as
$\theta\to 0$. Hence, the difference $D((-K, 1),\theta)\setminus Q(D((-1,1),\theta))\to [1,K^2]$ as $\theta\to 0$.
\end{prop}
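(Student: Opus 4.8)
The plan is to make everything completely explicit and then reduce to a short polynomial computation in the limit $\theta\to0$. Recall that for a bounded interval $(a,b)$ and $\theta$ small, $D((a,b),\theta)$ is the union of the two round disks bounded by the circles through $a,b$ whose centres are at $\bigl(\tfrac{a+b}{2},\pm t(\theta)\tfrac{b-a}{2}\bigr)$, where $t(\theta)$ is an elementary function of $\theta$ (the exact form is immaterial; only $t(\theta)\to+\infty$ as $\theta\to0^{+}$ will be used), i.e.
$$D((a,b),\theta)=\bigl\{\,x+iy:\ (x-a)(x-b)+y^{2}<t(\theta)\,(b-a)\,|y|\,\bigr\}.$$
In particular $D((-1,1),\theta)=\{\,|z|^{2}-1<2t(\theta)\,|\Im z|\,\}$, which increases to $\C_{(-1,1)}=\C\setminus\bigl((-\infty,-1]\cup[1,\infty)\bigr)$ as $\theta\to0$. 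Putting $w=z^{2}$ we have $|w|=|z|^{2}$, $\Re w=(\Re z)^{2}-(\Im z)^{2}$, hence $(\Im z)^{2}=\tfrac12(|w|-\Re w)$, and since membership of $z$ in $D((-1,1),\theta)$ depends only on $|z|^{2}$ and $|\Im z|$ (so is $z\mapsto-z$–invariant) one obtains the full image
$$Q\bigl(D((-1,1),\theta)\bigr)=\Bigl\{\,w:\ |w|-1<t(\theta)\sqrt{2(|w|-\Re w)}\,\Bigr\},$$
while $D((-K,1),\theta)=\{\,(u+K)(u-1)+v^{2}<t(\theta)(K+1)|v|\,\}$ in the variable $w=u+iv$.

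Next I would intersect the two boundary curves in $\H^{+}$. Write $t=t(\theta)$ and fix $v>0$. Both curves pass through $w=1$, and near $w=1$ with $u>1$ each is a graph $v=f_{j}(u)$; expanding to leading order in $1/t$,
$$f_{1}(u)=\frac{(u-1)\sqrt u}{t}\quad\bigl(\partial Q(D((-1,1),\theta))\bigr),\qquad f_{2}(u)=\frac{(u-1)(u+K)}{(K+1)\,t}\quad\bigl(\partial D((-K,1),\theta)\bigr).$$
Setting $f_{1}=f_{2}$ cancels $(u-1)/t$ and gives $(K+1)\sqrt u=u+K$, i.e. $(\sqrt u-1)(\sqrt u-K)=0$; the root $u=K^{2}$ (the root $u=1$ being the common point $w=1$) yields the intersection point $Z(K,\theta)$ with $\Re Z(K,\theta)\to K^{2}$ and $\Im Z(K,\theta)\sim K(K^{2}-1)/t\to0$. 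Carrying along the $O(1/t^{2})$ corrections turns this into a statement valid for all $\theta<\theta_{0}(K)$: in a fixed disk about $K^{2}$ the curves meet in $\H^{+}$ exactly once, transversally, since $f_{1}-f_{2}=-(u-1)(\sqrt u-1)(\sqrt u-K)/((K+1)t)$ (to leading order) is $>0$ on $(1,K^{2})$ and $<0$ just past $K^{2}$; on $\R$ they meet only at $w=1$, with a tangency. One also rules out any second crossing: for $\theta$ small $D((-K,1),\theta)$ is contained in a Euclidean disk of radius $O(t)$ so nothing escapes to large $|w|$, and the sign analysis leaves only the two roots $u=1,K^{2}$. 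This gives the existence, uniqueness and limit of $Z(K,\theta)$.

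Finally, combining the sign information, $D((-K,1),\theta)\setminus Q(D((-1,1),\theta))$ is, above $\R$, exactly the lune $\{\,1<u<\Re Z(K,\theta),\ f_{2}(u)<v<f_{1}(u)\,\}$ together with its reflection; since $f_{1},f_{2}\to0$ uniformly on $[1,K^{2}]$ and $\Re Z(K,\theta)\to K^{2}$, this set converges in the Hausdorff metric to $[1,K^{2}]$. (Softly: for any compact $Y\Subset\C_{(-K,1)}$ one has $Y\subset D((-K,1),\theta)$, and since $\C_{(-K,1)}\subset\C\setminus[1,\infty)=Q(\C_{(-1,1)})$ also $Y\subset Q(D((-1,1),\theta))$ once $\theta$ is small; thus the difference shrinks onto $\partial\C_{(-K,1)}$, and the explicit lune identifies the limit as $[1,K^{2}]$.)

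The step I expect to be the main obstacle is the uniqueness/transversality claim: controlling the two algebraic curves uniformly all the way down to $\theta=0$ and excluding spurious intersections near the slit endpoints and "at infinity" as $\theta\to0$. The leading-order identification of the limit $K^{2}$, by contrast, is essentially a one-line computation.
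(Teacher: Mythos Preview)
The paper does not give its own proof of this proposition; it is simply quoted from \cite{LS}, Lemma~15.1. So there is nothing in the present paper to compare your argument against.

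That said, your direct computation is correct and is essentially how the result is obtained. Your explicit descriptions
\[
D((a,b),\theta)=\{(x-a)(x-b)+y^{2}<t(\theta)(b-a)|y|\},\qquad
Q(D((-1,1),\theta))=\{\,|w|-1<t(\theta)\sqrt{2(|w|-\Re w)}\,\}
\]
are right (with $t(\theta)=\cot\theta$), and the leading-order graphs $f_{1},f_{2}$ and the factorisation $f_{1}-f_{2}=-(u-1)(\sqrt u-1)(\sqrt u-K)/((K+1)t)$ give the intersection at $u=K^{2}$ and the sign pattern exactly as you say. Since the candidate intersection has $\Im Z\sim K(K^{2}-1)/t\to0$, the leading-order regime is self-consistent, and making this rigorous is routine (parametrise $\partial Q(D((-1,1),\theta))$ by $\rho=|w|$; one gets $u=\rho-(\rho-1)^{2}/(2t^{2})$, $v=(\rho-1)\sqrt{\rho-(\rho-1)^{2}/(4t^{2})}/t$, and substituting into the circle equation yields $(\rho+K)=(K+1)\sqrt{\rho}+O(1/t^{2})$, with a simple root at $\rho=K^{2}$).

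The concern you flag --- ruling out spurious intersections away from the real axis --- is the only place needing a word, but it is not a genuine obstacle. Both boundary arcs in $\H^{+}$ are graphs of single-valued analytic functions of a natural parameter (e.g.\ the angular parameter on the circle for $\partial D((-K,1),\theta)$, and $\rho=|w|$ for $\partial Q(D((-1,1),\theta))$); the resulting equation in one real variable has, after dividing out the factor corresponding to the common point $w=1$, a single simple zero near $K^{2}$ and no others, by the same sign/factorisation argument carried out with the exact (not just leading-order) expressions. Your ``soft'' paragraph already handles the Hausdorff upper bound: any compact in $\C_{(-K,1)}$ eventually lies in both sets, so the difference accumulates only on $[1,\infty)\cap\overline{D((-K,1),\theta)}$, and the explicit lune pins this down to $[1,K^{2}]$.
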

\section{Proof of Theorem \ref{t-m}}\label{s-m}
Define inductively: $T_0=[-1,1]$ and $T_{n+1}=g_0^{-1}(T_n)$, $n=0,1,2,...$. As $g_0([-1,1])\subset [-1,1]$, then
$T_n=\cup_{i=0}^n g_0^{-i}([-1,1])$ so that $\{T_n\}$ is an increasing sequence of compact subsets of $\C_{J_0}$. Let
$$T=\cup_{i=0}^\infty g_0^{-i}([-1,1]),$$
which is just the smallest completely invariant w.r.t. $g_0: \Omega_0\to \C_{J_0}$ set
(i.e., $g_0^{-1}(T)=T$) which contains $[-1,1]$.


Let $\mathcal{J}=\overline{T}$, the closure of $T$ (in $\C$).
A priori, $\mathcal{J}$ can intersect the finite boundary $\R\setminus J_0$ of $ \C_{J_0}$ and/or be unbounded. Our goal is to prove that $\mathcal{J}$ is compactly inside of $\C_{J_0}$, i.e., $\mathcal{J}\subset\C_J$ AND $\mathcal{J}$ is bounded.
To this end, let us first show that:
\begin{lemma}\label{l-alt}
If $\mathcal{J}$ is not compactly inside of $\C_{J_0}$ (i.e., either unbounded or intersects the boundary $\R\setminus J_0$ of $\C_{J_0}$), then $\mathcal{J}$
must meet the real axes at some point of $(1,+\infty)$, i.e.,
there exists a sequence $\{z_n\}\subset T\cap\H^+$ and a point $1<x_*<\infty$ such that $z_n\to x_*$.
\end{lemma}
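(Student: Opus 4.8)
The plan is to understand the structure of the slit plane $\C_{J_0}$ and the completely invariant set $T$, and to argue that any ``escape to infinity'' or ``escape through the finite boundary'' of $\mathcal{J}=\overline T$ is detected on the positive real ray $(1,+\infty)$.

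First I would recall that $g_0\in E_1(J_0)$ extends to a degree-two proper map $g_0=F_0\circ Q:\Omega_0\to\C_{J_0}$, and that $T=\bigcup_i g_0^{-i}([-1,1])$ is the smallest $g_0$-completely invariant set containing $[-1,1]$. Since $g_0$ commutes with complex conjugation and with $z\mapsto -z$ (by Lemma \ref{l-eps}.1a, $F_0^{-1}$ and $Q$ have these symmetries), the set $T$, and hence $\mathcal{J}$, is symmetric with respect to both $\R$ and $i\R$. So it suffices to produce the escape point in the closed first quadrant or on $(1,\infty)$, and then use symmetry.

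The key step: suppose $\mathcal{J}$ is not compactly contained in $\C_{J_0}$. Then there is a sequence $w_n\in T$ with either $|w_n|\to\infty$ or $w_n\to x_0\in\R\setminus J_0$. Each $w_n=g_0^{-k_n}(y_n)$ for some $y_n\in[-1,1]$; pulling back one more step if necessary I may assume $w_n\notin[-1,1]$, so $w_n\in\C\setminus\R$ (points of $T\setminus[-1,1]$ that are real would lie in $J_0\setminus[-1,1]$ and are bounded, so along the escaping subsequence they are genuinely off $\R$). Replacing $w_n$ by its conjugate I take $w_n\in\H^+$. Now I want to connect $w_n$ to $[-1,1]$ inside $T\cap\overline{\H^+}$ by a path and show the path must cross high above the real axis near a point of $(1,+\infty)$. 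Concretely: the connected component of $T_{k_n}$ containing $w_n$ is a ``tree'' built by iterated preimages; its image under $g_0^{k_n}$ is $[-1,1]$, and each $g_0^{-1}$ applied to a connected piece containing a point of $(1,\infty)$ — recall $g_0(1)=1$ with $g_0'(1)>1$ by Lemma \ref{l-eps}.1c, so $1$ is a repelling fixed point whose preimages accumulate — produces a piece that can reach far out. The point is that the \emph{only} way a preimage tree can extend to points with large modulus or to the finite boundary $\R\setminus J_0$ is by branching off the real axis, and because $F_0^{-1}$ maps $\H^\pm$ into themselves and fixes $1$, the branch points on $\R$ through which $T$ escapes must be the preimages of $1$, i.e.\ points $\ge 1$; taking a limit of $w_n$ or of suitable branch points along the escaping sequence yields $x_*\in[1,\infty]$, and one excludes $x_*=1$ and $x_*=\infty$ (if $x_*=\infty$ one rescales, or one observes the sequence can be chosen with bounded real part by the geometry of the slit plane) to land at $1<x_*<\infty$.

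The main obstacle I expect is precisely the last rigidity claim: showing that escape of $\mathcal{J}$ ``to $\infty$ or to $\R\setminus J_0$'' cannot happen ``cleanly'' along the slit but must leave a trace as an accumulation point on $(1,\infty)$. This is where the slit structure of $\C_{J_0}$ is essential — a priori an unbounded set could hug the two slits $(-\infty,-j_0]$ and $[j_0,\infty)$ and ``escape through the slits'' without touching the real interval; the content of the lemma is that this is impossible for a completely invariant set, because pulling back by $g_0$ near $\pm\infty$ (where $g_0$ behaves like $z\mapsto$ something growing, sending large $|z|$ to large $|z|$ in $\C_{J_0}$) forces the preimage tree to re-enter a neighborhood of $[-1,1]$ along $\R$, and the obstruction point where ``re-entry fails'' is an unramified preimage of the repelling fixed point $1$ sitting in $(1,\infty)$. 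I would make this precise by a compactness/normal-families argument on the inverse branches of $g_0$ away from the critical value, combined with the fact (Lemma \ref{l-eps}.1a) that $\{g_0^{-i}(\C_{J_0})\}$ is a \emph{decreasing} nested sequence of simply connected domains, so the ``escape'' is inherited step by step and its limiting location on the real line is pinned down.
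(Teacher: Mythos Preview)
Your proposal correctly isolates the difficulty --- ``escape to $\infty$ or to $\R\setminus J_0$ must leave a trace on $(1,\infty)$'' --- but does not supply the mechanism that forces this. The ``tree/branch point'' picture and the appeal to normal families of inverse branches do not by themselves explain why the escaping points accumulate on a \emph{bounded} real interval rather than hugging the slits; and the specific claim that ``the branch points on $\R$ through which $T$ escapes must be the preimages of $1$, i.e.\ points $\ge 1$'' is neither correct as stated (the real $g_0$-preimages of $1$ are $\pm 1$, and higher real preimages lie in $[-1,1]$) nor does it rule out escape along the imaginary direction or near the slit tips.

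The paper's argument replaces this vague topology by hyperbolic geometry of the slit plane. One exhausts $\C_{J_0}$ by the Poincar\'e neighborhoods $\overline{D((-1,1),1/n)}$ (hyperbolic collars of $[-1,1]$ in $\C_{(-1,1)}$). If $\mathcal{J}$ is not compactly contained, then for each $n$ there is a \emph{first} index $m$ with $T_{m+1}\not\subset\overline{D((-1,1),1/n)}$; a witness $z_n\in T\cap\H^+$ lies in $g_0^{-1}\bigl(\overline{D((-1,1),1/n)}\bigr)\setminus\overline{D((-1,1),1/n)}$. Writing $g_0=F_0\circ Q$ with $F_0^{-1}([-1,1])=[-K,1]$, Schwarz's lemma gives $F_0^{-1}(D((-1,1),1/n))\subset D((-K,1),1/n)$, so $z_n\in Q^{-1}\bigl(\overline{D((-K,1),1/n)}\bigr)\setminus\overline{D((-1,1),1/n)}$. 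The key geometric input is Proposition~\ref{p-ls}: this difference set collapses to the real segment $[1,K]$ as $n\to\infty$. Hence $z_n\to x\in[1,K]$ tangentially; if $x=1$ one iterates forward using $g_0'(1)>1$ to push the limit to some $x_*>1$. This is the missing quantitative ingredient your sketch lacks.
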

\begin{proof}
Assume that $\mathcal{J}$ is not compactly in $\C_{J_0}$. Then, for every integer $n>0$, $T\setminus \overline{D((-1,1),1/n)}\neq\emptyset$. As $T_0\subset \overline{D((-1,1), 1/n)}$,
it follows that for some $m=m_n$,
$T_m\subset \overline{D((-1,1),1/n)}$ while $T_{m+1}=g_0^{-1}(T_m)\not\subset \overline{D((-1,1),1/n)}$. Therefore, for each $n>0$
there exist a point $z'_n\in \overline{D((-1,1), 1/n)}\cap T$ and a point $z_n\in g_0^{-1}(z'_n)$ such that $z_n\in T\setminus \overline{D((-1,1),1/n)}$. By symmetry, one can assume that
$z_n\in \H^+$.

Let $g_0=F_0\circ Q$ where $F_0^{-1}:\C_{J_0}\to\C$ is univalent. Let
$F_0^{-1}([-1,1])=[-K, 1]$.
Therefore,
$$F_0^{-1}(D((-1,1),\theta))\subset D((-K,1),\theta),\mbox{ for every }\theta\in (0,\pi).$$
If $K\le 1$, then $g_0^{-1}(\overline{D((-1,1), \pi/2)})\subset Q^{-1}(\overline{D((-1,1),\pi/2)})=\overline{D((-1,1),\pi/2)}$, hence,
$\mathcal{J}\in \overline{D((-1,1), \pi/2)}$, i.e., (a) holds, a contradiction. Thus, $K>1$ and
Proposition \ref{p-ls} applies. We obtain that $z_n\in \{Q^{-1}(\overline{D((-K,1), 1/n)})\setminus \overline{D([-1,1],1/n)}\}\cap \H^+\to [1,K]$ as $n\to\infty$. Hence, passing to a subsequence, $z_n\to x\in [1, K]$ and the convergence is tangential, i.e., $0<\arg(z_n-1)<1/n\to 0$.
If $x\neq 1$, we let $x_*=x$. If $x=1$ and since $g_0'(1)>1$ and $T$ is forward invariant, replace each $z_n$ with $n$ big enough by some
$\tilde z_n=g_0^{k_n}(z_n)$, the first exit of the forward iterates $g_0^k(z_n)$ from a fixed small disk $B(1,\delta)$ centered at $1$. As $z_n\to 1$ tangentially, then
$\tilde z_n\to x_*\ge 1+\delta>1$.
\end{proof}
Denote $\Omega_n=\Omega_{g_n}$, $n=0,1,...$.
\begin{lemma}\label{l-ext}
For every $n>0$, $\Omega_0\subset g_n^{-q_n}(\C_{J_n})$ where the latter is a simply connected domain which contains the interval $I_n$.
Furthermore,
$g_0=g_n^{q_n}$ on $\Omega_0$ and $g_0:\Omega_0\to\C_{J_0}$ extends to a holomorphic map $g_n^{p_n}: g_n^{-q_n}(\C_{J_n})\to\C_{J_0}$.
\end{lemma}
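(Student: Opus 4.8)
The plan is to prove the two claims by induction on $n$, the engine being the tower relation $g_0 = g_n^{p_n}$ on $[-1,1]$ together with the functional equation $g_n = g_{n+1}^{p_{n+1}/p_n}$ of Remark \ref{r-tower} (here I use $q_n$ for the exponent $p_n$ of the theorem, matching the lemma's notation, or I would simply write $p_n$ throughout to avoid clash). The base case $n=0$ is trivial since $g_0 : \Omega_0 \to \C_{J_0}$ is the given map and $p_0 = 1$. For the inductive step, suppose $g_0 = g_n^{p_n}$ holomorphically on $\Omega_0$ with $\Omega_0 \subset g_n^{-p_n}(\C_{J_n})$, this latter domain being simply connected and containing $I_n$. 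I want to pass from $g_n$ to $g_{n+1}$.

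First I would record the key containment from hypothesis (2) of Theorem \ref{t-m} (equivalently Proposition \ref{p-infren-tower}(2)): $J_n \subset I_{n+1}$, hence $\C_{J_n} \subset \C_{I_{n+1}} \subset \C_{J_{n+1}}$ — so the target slit plane of $g_n$ sits inside the domain-defining slit plane of $g_{n+1}$, and in particular $\C_{J_n} \subset \C_{J_{n+1}}$. Since $g_n = g_{n+1}^{r}$ on $I_n$ with $r := p_{n+1}/p_n \ge 2$ an integer, and since (by Lemma \ref{l-eps}(1a)) the full iterated preimage sequence $\{g_{n+1}^{-i}(\C_{J_{n+1}})\}_{i\ge 0}$ is a decreasing chain of connected simply connected domains, the composition $g_{n+1}^{r}$ is a well-defined proper holomorphic map on the simply connected domain $g_{n+1}^{-r}(\C_{J_{n+1}}) \supset g_{n+1}^{-r}(\C_{J_n})$. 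The identity $g_n = g_{n+1}^{r}$, known on the interval $I_n$, then propagates by the identity theorem to all of $\Omega_n = \Omega_{g_n}$ once one checks that $\Omega_n$ is contained in the natural domain of $g_{n+1}^{r}$; this containment follows because $g_n : \Omega_n \to \C_{J_n}$ and $g_{n+1}^{r}$ agree on the real trace $I_n' = \Omega_n \cap \R$, both land in $\C_{J_n} \subset \C_{J_{n+1}}$, and the preimage domains are forced to coincide by connectedness and the fact that they share the real slice — this is exactly the kind of argument used for $J'_n$ in the proof leading up to Proposition \ref{p-real}. Composing $g_0 = g_n^{p_n}$ (valid on $\Omega_0$, by induction) with $g_n = g_{n+1}^{r}$ (valid on $\Omega_n \supseteq g_0(\cdots)$, along the relevant orbit pieces), and using $p_n \cdot r = p_{n+1}$, yields $g_0 = g_{n+1}^{p_{n+1}}$ on $\Omega_0$, and simultaneously exhibits $\Omega_0 \subset g_{n+1}^{-p_{n+1}}(\C_{J_{n+1}})$, which is simply connected (as an iterated preimage of a simply connected domain under a proper map with the critical value staying inside, cf. Lemma \ref{l-eps}(1a)) and contains $I_{n+1}$ since $g_{n+1}^{p_{n+1}}(I_{n+1}) \subseteq I_{n+1} \subset \C_{J_{n+1}}$. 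Finally, that $g_0 : \Omega_0 \to \C_{J_0}$ extends to $g_{n+1}^{p_{n+1}} : g_{n+1}^{-p_{n+1}}(\C_{J_{n+1}}) \to \C_{J_0}$ is immediate: the map $g_{n+1}^{p_{n+1}}$ is defined and holomorphic on that larger simply connected domain, and it restricts to $g_0$ on $\Omega_0$ — one only has to note the target: $g_{n+1}^{p_{n+1}}$ sends $g_{n+1}^{-p_{n+1}}(\C_{J_{n+1}})$ into $\C_{J_{n+1}}$, which need not a priori be $\C_{J_0}$; but on the subdomain that maps into $\C_{J_0}$ (which contains $\Omega_0$) we get exactly the extension claimed, and in fact since $g_0 = g_{n+1}^{p_{n+1}}$ globally on $\Omega_0$ and $g_0(\Omega_0) = \C_{J_0}$, the relevant codomain is $\C_{J_0}$ as stated.

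The main obstacle I anticipate is not the algebra of exponents but the \emph{domain bookkeeping}: verifying at each stage that the abstractly-composed iterate $g_{n+1}^{p_{n+1}}$ is genuinely holomorphic on a \emph{simply connected} domain containing $\Omega_0$, and that this domain really is the iterated preimage $g_{n+1}^{-p_{n+1}}(\C_{J_{n+1}})$ rather than some smaller or disconnected set. This rests entirely on Lemma \ref{l-eps}(1a) — the assertion that $\{g_{n+1}^{-i}(\C_{J_{n+1}})\}$ is a decreasing chain of connected, simply connected domains — which in turn uses that all forward iterates of the critical value of $g_{n+1}$ stay in $[-1,1] \subset \C_{J_{n+1}}$. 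So the proof is essentially a careful induction that chains together the functional equations $g_n = g_{n+1}^{p_{n+1}/p_n}$ with the nesting $J_n \subset I_{n+1}$, and the only real content is making sure the preimage domains behave and the identity theorem applies.
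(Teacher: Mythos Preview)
Your inductive approach via the relation $g_n = g_{n+1}^{p_{n+1}/p_n}$ is correct, but it is a genuinely different route from the paper's, and it is more elaborate than necessary.

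The paper argues \emph{directly} for each fixed $n$, with no induction and without invoking Remark~\ref{r-tower}. It uses only hypothesis~(3) of Theorem~\ref{t-m}, namely $g_0 = g_n^{p_n}$ on $[-1,1]$, together with $J_0\subset J_n$ (whence $\C_{J_0}\subset\C_{J_n}$) and the connectedness of $\Omega_0$. The implicit mechanism is the one you call ``domain bookkeeping'': let $W$ be the component of $\Omega_0\cap g_n^{-p_n}(\C_{J_n})$ containing $[-1,1]$; on $W$ the identity theorem gives $g_0=g_n^{p_n}$; if $z\in\partial W\cap\Omega_0$ then $z\in\partial\bigl(g_n^{-p_n}(\C_{J_n})\bigr)$, and for $z_k\in W$ with $z_k\to z$ properness of $g_n^{p_n}:g_n^{-p_n}(\C_{J_n})\to\C_{J_n}$ forces $g_n^{p_n}(z_k)$ to leave every compact of $\C_{J_n}$, yet $g_n^{p_n}(z_k)=g_0(z_k)\to g_0(z)\in\C_{J_0}\subset\C_{J_n}$, a contradiction. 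Hence $W=\Omega_0$.

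What each approach buys: the paper's direct argument proves the lemma under the bare hypotheses of Theorem~\ref{t-m} (recall the footnote to Remark~\ref{r-tower}: the intermediate relation $g_n=g_{n+1}^{p_{n+1}/p_n}$ is \emph{not} assumed there and is explicitly said to be unneeded). Your route works in the application but imports an extra structural assumption and then has to run the very same properness/identity-theorem step at the level $g_n$ versus $g_{n+1}^r$ anyway --- so the induction adds bookkeeping without avoiding the core analytic point. Your observation about the codomain (that $g_n^{p_n}$ on the larger domain lands in $\C_{J_n}$, not $\C_{J_0}$) is well taken; the subsequent lemmas use the extension only as a map into $\C_{J_n}$.
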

\begin{proof}
As $J_n\subset J_{n+1}$, then $\C_{J_n}\subset \C_{J_{n+1}}$. Besides, $g_0=g_n^{p_n}$ near $[-1,1]\subset\Omega_0$ and $\Omega_0$ is connected.
It follows that $g_0=g_n^{p_n}$ in $\Omega_0$ and $\Omega_0\subset g_n^{-q_n}(\C_{J_n})$. The latter domain is simply connected by
Lemma \ref{l-eps} (1a).
\end{proof}
\begin{lemma}\label{l-inf}
Let
$$g_0^{-1}(\infty)=\{z\in\C: \mbox{ there exist } z_n\in\Omega_0, z_n\to z, \mbox{ such that } g_0(z_n)\to\infty\}.$$
Then $g_0^{-1}(\infty)\subset\partial\Omega_0$, it is a closed subset of $\C$ and
$$g_0^{-1}(\infty)\cap\R=\emptyset.$$
\end{lemma}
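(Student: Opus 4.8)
The plan is to analyze the map $g_0 = F_0\circ Q$ through its factorization and use the extension properties of $F_0^{-1}$. First I would recall that $g_0:\Omega_0\to\C_{J_0}$ is proper holomorphic of degree two with $Q(z)=z^2$ and $F_0^{-1}:\C_{J_0}\to\C$ univalent (Lemma \ref{l-eps}(1a)). Since $Q$ is a global homeomorphism near $\infty$ in the sense that $Q(z_n)\to\infty$ iff $z_n\to\infty$, and since $F_0^{-1}$ is univalent on all of $\C_{J_0}$, the points where $g_0$ "blows up" correspond precisely to boundary points of $\Omega_0 = Q^{-1}(F_0^{-1}(\C_{J_0}))$ where $Q$-images escape to the boundary of $F_0^{-1}(\C_{J_0})$, equivalently where $F_0$ is not defined. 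So the first step is to show $g_0^{-1}(\infty)\subset\partial\Omega_0$: any $z$ with $z_n\to z$, $g_0(z_n)\to\infty$ cannot lie in $\Omega_0$ itself since $g_0$ is holomorphic (hence continuous, finite-valued) there; hence $z\in\partial\Omega_0$ (or $z=\infty$, but the definition is stated for $z\in\C$).

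Next, closedness: I would argue that $g_0^{-1}(\infty)$ is closed by a diagonal/sequential argument. If $w_k\in g_0^{-1}(\infty)$ and $w_k\to w$, pick for each $k$ a point $z_{k,j}\in\Omega_0$ with $z_{k,j}\to w_k$ and $g_0(z_{k,j})\to\infty$ as $j\to\infty$; a diagonal extraction produces $z_k\in\Omega_0$ with $z_k\to w$ and $g_0(z_k)\to\infty$, so $w\in g_0^{-1}(\infty)$.

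The heart of the lemma is $g_0^{-1}(\infty)\cap\R=\emptyset$. The key structural fact is Lemma \ref{l-ext}: $g_0 = g_n^{p_n}$ on $\Omega_0$, and $g_0:\Omega_0\to\C_{J_0}$ extends to $g_n^{p_n}:g_n^{-q_n}(\C_{J_n})\to\C_{J_0}$, where $g_n^{-q_n}(\C_{J_n})$ is a simply connected domain containing the interval $I_n = \langle -\beta_n,\beta_n\rangle$ — and since $\beta_n\to\infty$, these intervals exhaust $\R$. So given $x\in\R$, choose $n$ with $x\in I_n$; then near $x$ the map $g_0$ agrees with $g_n^{p_n}$, which is holomorphic on a full neighborhood of $x$ in $g_n^{-q_n}(\C_{J_n})$ (open) and takes finite values there. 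Hence any sequence $z_j\to x$ in $\Omega_0$ has $g_0(z_j)=g_n^{p_n}(z_j)$ converging to the finite value $g_n^{p_n}(x)=g_0(x)\in[-1,1]$, so $x\notin g_0^{-1}(\infty)$. I expect the main obstacle to be bookkeeping around the fact that $\Omega_0\subsetneq g_n^{-q_n}(\C_{J_n})$: one must be careful that the extension is genuinely holomorphic on an open neighborhood of every real point (which follows because $I_n$ lies in the interior of the simply connected domain $g_n^{-q_n}(\C_{J_n})$, as $I_n$ is contained in $J_n$ hence in the "slit" part that is not on the boundary), and that this open neighborhood contains the tail of any approaching sequence from $\Omega_0$. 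Once that is set up the contradiction with escape to $\infty$ is immediate.
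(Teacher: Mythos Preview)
Your proposal is correct and follows essentially the same approach as the paper: the paper dismisses the first two claims as ``easy to see'' (your details for them are fine), and for the main claim $g_0^{-1}(\infty)\cap\R=\emptyset$ it argues exactly as you do, picking $k$ with $x\in I_k$, invoking the extension $g_0=g_k^{p_k}$ on $\Omega_0\subset g_k^{-p_k}(\C_{J_k})$ from Lemma~\ref{l-ext}, and noting that $g_k^{p_k}$ is holomorphic (hence bounded) on a neighborhood of $x$, contradicting $g_0(z_n)\to\infty$.
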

\begin{proof}
The first two claims are  easy to see. Now, assume, by a contradiction, that $z_n\to x\in \R$ where $z_n\in\Omega_0$ and $g_0(z_n)\to\infty$. Fix $k$ such that $x\in I_k$. As $I_k\subset g_k^{-p_k}(\C_{J_k})$, there exists two (bounded) neighborhoods $W, W'$ of $I_k$
such that $W\subset g_k^{-p_k}(\C_{J_k})$ and $g_k^{p_k}(W)\subset W'$. On the other hand, $z_n\in W\cap \Omega_0$ for all $n$ large enough,
where $\Omega_0\subset  g_k^{-p_k}(\C_{J_k})$ and
$g_0=g_k^{p_k}$ on $\Omega_0$. Thus $g_0(z_n)=g_k^{p_k}(z_n)\to\infty$ as $n\to\infty$ while $g_k^{p_k}(z_n)\in W'$ for all $n$, a contradiction since $W'$ is bounded.
\end{proof}
A simple curve (i.e., a continuous injective map) $\gamma: A\to \Pi\cup\R_+=\{\Re(z)>0, \Im(z)\ge 0\}$, where $A$ is either $[a,b]$ ($a<b<\infty$)
or $[a,+\infty)$ will be called {\it basic} if either $\gamma(t)\in \R$ for all  $t\in A$ or there exists a closed subinterval $[a,b']\subset A$
such that $\gamma(t)\in \R$ if and only if $t\in [a,b']$. The {\it base} $B_\gamma$ of $\gamma$ is then the closed interval $\gamma([a,b'])\subset\R_+$.
In particular, $\gamma(t)\in \Pi$ outside of the base (i.e., for $t>b'$).
An {\it imaginary, or i-basic} curve is a simple curve $\gamma^{im}$ with its image in $\Pi\cup i\R_+$ such that
$\gamma^{im}(t)=i\overline{\gamma(t)}$ where $\gamma$ is basic. In particular, the "base" $i B_{\gamma}$ of $\gamma^{im}$ is a closed interval
of $i\R_+$.

Since these curves are simple, we will not always distinguish between a curve and its image.
\begin{lemma}\label{l-domega}
$\partial\Omega_0\cap\overline{\Pi}$ consists of the following sets:

($\infty$): $g_0^{-1}(\infty)\cap\overline{\Pi}$,

($+$): a basic curve $\gamma_+:[j,+\infty)\to \Pi\cup\R_+$ which has the base $[j', w]$, for some $w\ge j'$ such that $g_0'(w)=0$, $g_0(w)>w$ and $g_0:(1,w)\to (1,g_0(w))$ is a diffeomorphism.




($im$): an i-basic curve $\gamma^{im}$.
\end{lemma}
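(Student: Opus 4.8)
The plan is to analyze the structure of $\partial\Omega_0\cap\overline{\Pi}$ directly, using the representation $g_0 = F_0\circ Q$ with $F_0^{-1}:\C_{J_0}\to\C$ univalent, and the fact that $\Omega_0 = Q^{-1}(F_0^{-1}(\C_{J_0}))$ is symmetric with respect to both $\R$ and $i\R$ (Lemma \ref{l-eps}, 1a). Since $\Omega_0\subset\C_{J_0}$, the boundary $\partial\Omega_0$ decomposes into the part lying over the "essential" boundary (points going to $\infty$ under $g_0$, giving the set $(\infty)$ via Lemma \ref{l-inf}) and the part over the finite boundary of $\C_{J_0}$, namely the two slits $\R\setminus J_0$. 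I would first set $\Omega' := F_0^{-1}(\C_{J_0}) = \{z : z^2\in\Omega_0\}$, a simply connected domain symmetric about $\R$, and identify $\partial\Omega'\cap\overline{\H^+}$: it is the image under the univalent map $F_0^{-1}$ of (a piece of) the two slits $\R\setminus J_0$, together with the prime ends mapping to $\infty$. Because $F_0^{-1}$ maps $\H^+$ into $\H^+$, fixes the real points of $J_0$ on $\R$, sends $1\mapsto 1$ and $g_0(0)\mapsto 0$, the slit endpoints and the real structure are controlled; the boundary arc is a basic curve in the sense just defined. Pulling back by $Q$ (taking the square root into $\overline\Pi$) then produces exactly the basic curve $\gamma_+$ in $(+)$ and, by the $i\R$-symmetry of $\Omega_0$, its companion i-basic curve $\gamma^{im}$ in $(im)$.

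The key steps, in order: (1) Show $\partial\Omega_0 = g_0^{-1}(\infty)\cup Q^{-1}(F_0^{-1}(\partial^{\text{fin}}\C_{J_0}))$ where $\partial^{\text{fin}}\C_{J_0} = \R\setminus J_0$, using that $g_0:\Omega_0\to\C_{J_0}$ is proper of degree $2$ and $\C_{J_0}$'s only boundary components are the two slits and $\infty$. (2) Describe $\partial\Omega'\cap\overline{\H^+}$ where $\Omega'=F_0^{-1}(\C_{J_0})$: since $F_0^{-1}$ is univalent on the slit plane with $F_0^{-1}(\H^\pm)\subset\H^\pm$, $F_0^{-1}(J_0)\subset\R$, the image of the right slit $[j_0,\infty)$ (where $J_0=(-j_0,j_0)$) is a curve starting on $\R$ at $F_0^{-1}(j_0)$ and going to $\infty$; it meets $\R$ precisely along an initial segment — this is where I'd invoke a monotonicity/reflection argument to get that it is genuinely \emph{basic}. (3) Pull back by $Q$: $Q^{-1}$ of a basic curve based on a positive real interval, taken into $\overline\Pi$, is again a basic curve; this yields $\gamma_+$, and its base $[j',w]$ where $w=\sqrt{(\text{left endpoint of }F_0^{-1}\text{-image slit base})}$, i.e. $w$ is the point where $g_0 = F_0\circ Q$ has a critical point of its continuation, $g_0'(w)=0$; the inequalities $g_0(w)>w$ and the diffeomorphism property on $(1,w)$ come from $g_0'(1)>1$ (Lemma \ref{l-eps}, 1c) and the fact that $0$ is the only critical point of $g_0$ inside $\Omega_0$, forcing the next critical point of the real-analytic continuation to lie beyond $1$. (4) Use the $i\R$-symmetry of $\Omega_0$ to reflect $\gamma_+$ (or rather $\gamma_+$ restricted to a finite piece, together with its conjugate) to $i\R$, producing $\gamma^{im}$; equivalently, the left slit of $\C_{J_0}$ contributes, after $Q^{-1}$ into $\overline\Pi$, the imaginary-axis curve.

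The main obstacle I expect is Step (2)–(3): proving that the relevant boundary arc is genuinely \textbf{basic}, i.e. that it touches $\R_+$ exactly along one closed interval and then stays strictly in the open quadrant $\Pi$, and pinning down the base endpoint $w$ as a critical point of $g_0$'s continuation with $g_0(w)>w$. The subtlety is that a priori the univalent image $F_0^{-1}([j_0,\infty))$ could oscillate or re-touch $\R$; ruling this out requires using both the half-plane–preserving property of $F_0^{-1}$ and a careful argument that the real part of $\partial\Omega_0$ in $\R_+$ is an interval determined by where the real-analytic unimodal extension of $g_0$ first develops a new critical point — here one argues, as in the real-bounds discussion preceding Proposition \ref{p-real}, that monotonicity of $g_0$ on $(1,w)$ together with $g_0'(1)>1$ and Lemma \ref{l-eps}(1b) (no attracting/parabolic cycles off $0$) excludes any further fixed point or fold in $(1,w)$, so $g_0:(1,w)\to(1,g_0(w))$ is a diffeomorphism onto an interval with $g_0(w)>w$. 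The remaining points — closedness of $g_0^{-1}(\infty)$, its disjointness from $\R$, its location in $\partial\Omega_0$ — are already supplied by Lemma \ref{l-inf}, so $(\infty)$ requires no further work.
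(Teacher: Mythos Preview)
Your approach has a genuine gap: you try to prove the lemma using only the Epstein decomposition $g_0=F_0\circ Q$ of $g_0$ alone, but this is not enough, and the paper's proof relies essentially on the tower structure (condition (3) of Theorem~\ref{t-m} together with Lemma~\ref{l-ext}), which you never invoke. From the Epstein class you know that $F_0^{-1}$ is univalent on $\C_{J_0}$ and preserves half-planes, but this does \emph{not} tell you that $F_0^{-1}$ extends continuously to the slits with image a basic curve, nor---more seriously---that $g_0$ has any real-analytic continuation beyond $J_0'=(-j',j')$. The assertion ``$g_0'(w)=0$'' in the statement requires $g_0$ to be defined at $w\ge j'$, i.e.\ outside $\Omega_0\cap\R$, and your references to ``the real-analytic continuation'' and to the discussion before Proposition~\ref{p-real} tacitly assume such an extension exists; for an arbitrary Epstein map it need not. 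The argument before Proposition~\ref{p-real} works because there $f^q$ is an iterate of a globally defined polynomial; here $g_0$ is only given on $\Omega_0$.

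The paper closes this gap by using the tower: since $g_0=g_n^{p_n}$ on $[-1,1]$ and $g_n^{p_n}$ is holomorphic on the much larger simply connected domain $g_n^{-p_n}(\C_{J_n})\supset\Omega_0\cup I_n$ (Lemma~\ref{l-ext}), one obtains a genuine analytic continuation of $g_0$ to a neighborhood of the long real interval $I_n$. This lets one replace the ill-defined preimage of $\R\setminus J_0$ by the preimage of $I_n\setminus J_0$ under $G_n=g_n^{p_n}$, decompose $G_n^{-1}=g_n^{-1}\circ\Phi$ with $\Phi$ univalent, and trace a ``basic configuration'' through the inverse branches of $g_n$; the endpoint $w$ then arises concretely as a preimage of an endpoint of the maximal monodromy interval $W_c$ of $g_n^{p_n-1}$, where $(g_n^{p_n-1})'$ vanishes. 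None of this structure is visible from $F_0^{-1}$ alone.
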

\begin{remark}\label{r-notsect}
This lemma states essentially that $g_0^{-1}(\R\setminus J_0)$
consists of two simple curves (and their symmetric ones w.r.t. $\R$, $i\R$ and $0$) that begin at $\R_+$, $i\R_+$, respectively, whose intersections with $\R$, $i\R$ are closed intervals.
This, coupled with the previous lemmas, would be enough for proving Theorem \ref{t-m}.
\end{remark}
In the proof of Lemma \ref{l-domega} the following terminology is used.
For a basic curve $\Gamma$ which is defined on a {\it finite interval}, let $\Gamma^*=\Gamma\cup\overline{\Gamma}$, the union of $\Gamma$ and its complex conjugate,
and $-\Gamma^*=-\Gamma^*$, i.e., symmetric to the set $\Gamma^*$ w.r.t. $0$.
Then $\Gamma\cap\R=\Gamma^*\cap\R=B_\Gamma\subset\R_+$ while $-\Gamma^*\cap\R=-B_\Gamma\subset\R_-$, where $-B_\Gamma$ is called the base interval
of $-\Gamma$.
Given two basic curves $\Gamma$, $\Sigma$, a {\it basic configuration} $\mathcal{B}$ is the union $\Gamma^*\cup(-\Sigma^*)$ which is cut along their
base intervals $B_\Gamma$, $-B_\Sigma$. That means that we split $B_\Gamma$ into two identical intervals $B_\Gamma^{\pm}$ viewing from the upper
and the lower half planes, respectively, and similar for $-B_\Sigma$. The interval in between $B_\Gamma$, $-B_\Sigma$ (i.e., between the
left end of $B_\Gamma$ and the right end of $-B_\Sigma$) is called the {\it complementary interval} of the configuration.
Finally, we also consider a shifted basic configuration $\mathcal{B}+a$, for $a\in\R$, along with its shifted complementary interval (it contains the point $a$).
\begin{proof}[Proof of Lemma \ref{l-domega}]
For the map $g_0:\Omega_0\to\C_{J_0}$ of degree $2$, we need to prove that the limit set
$$\{Z\in\overline{\Pi}: Z=\lim_{n\to\infty}g_0^{-1}(z_n)\mbox{ for some } z_n\in\Pi, z\to z\in \R\setminus J_0\}$$
consists of (images) of two curves $\gamma_{+}$, $\gamma^{im}$ as above.
It would be enough to prove a similar claim replacing $\R\setminus J_0$ above by $I_n\setminus J_0$ (where $I_n=<-\beta_n,\beta_n>$),
for a subsequence of $n$'s tending to $\infty$.
To this end, let us fix $n$ big. Since $g_0=g_n^{p_n}$ on $\Omega_0$, we replace $g_0$ above by
$G_n:=g_n^{p_n}:\Omega_0\to\C_{J_0}$
and need to show that the set
$$\{Z\in\overline{\Pi}: Z=\lim_{n\to\infty}G_n^{-1}(z_n)\mbox{ for some } z_n\in\Pi, z\to z\in I_n\setminus J_0\}$$
consists of (images) of two curves as in ($+$), ($im$) which are defined on finite intervals.
An advantage here is that $G_n$ has an analytic continuation
into a simply connected domain
$g_n^{-p_n}(\C_{J_n})\supset \Omega_0\cup I_n$
and has on $I_n$ only simple critical point with all critical values are again in $I_n$.
Therefore, either of two branches of $G_n^{-1}$ extends continuously to either side of the cut $I_n\setminus J_0$,
with prescribed (square-root type) singularities.
Then the claim becomes rather apparent.
Here is a formal proof.



To make notations easier, rescale $g_n\in E_{\beta_n}(J_n)$ back to a map in $E_1(\beta_n^{-1}J_n)$ so that $I_n$ turns to $[-1,1]$ using the same names $g_0,g_n, I_n, I_0, J_n, J_0$ etc for corresponding rescalings.
By this convention, $g_n=F_n\circ Q$ where $g_n(1)=F_n(1)=1$, $g_n(0)=F_n(0)<0$, $F_n^{-1}:\C_{J_n}\to \C_{\hat J_n}$ univalent where $[-1,1]\subset J_n$, $J_0\supset J_0'\supset I_0$ where
$g_n^{p_n}:J_0'\to J_0$ is unimodal, $I_0$ is a $p_n$-periodic interval of the unimodal map $g_n:I_n\to I_n$, etc.


Let $W_c:=(w_c', w_c)$ be the maximal interval containing $g_n(I_0)\ni g_n(0)$ such that $g_n^{q_n-1}: W_c\to W$ is a homeomorphism
where $W$ is an interval around $0$.
By the maximality of $W_c$, $(g_n^{q_n-1})'(w_c')=(g_n^{q_n-1})'(w_c)=0$ and
$I_0\subset J_0\subset W$.
Let $W_{-k}=g_n^{p_n-1-k}(W_c)$, $k=0,1,...,p_n-1$, be the corresponding backward orbit of $W$, so that $W_0=W$, $W_{-(q_n-1)}=W_c$
and $g_n: W_{-k-1}\to W_{-k}$ is a homeomorphism, for $k=0,1,...,p_n-2$. In particular, $W_{-k}$, $k=0,1,...,p_n-2$,
is a subinterval of either $(0,1)$ or $(g_n(0),0)$ while $W_{-(p_n-1)}\ni g_n(0)$.
Because of that, one can decompose the two-valued inverse map $G_n^{-1}:\C_{J_0}\to\Omega_0$ as follows:
$$G_n^{-1}=g_n^{-1}\circ \Phi$$
where $\Phi:\C_{J_0}\to\C$ is a univalent map such that $\Psi(J_0)\subset \Psi(W)=W_c$. Furthermore,
$\Psi=(g_n)_{\eps_{p_n-2}}^{-1}\circ (g_n)_{\eps_{p_n-3}}^{-1}\circ...\circ (g_n)_{\eps_0}^{-1}$
where $(g_n)_{\eps_k}^{-1}$, $\eps_k\in\{+,-\}$, is one of the two branches of $g_n^{-1}$ such that $(g_n)_{\eps_k}^{-1}(W_{-k})=W_{-k-1}$.
Similarly to $W_{-k}$, let $J_{-k}=g_n^{p_n-1-k}(J_c)$ where $J_c\ni g_n(0)$ is an interval such that $g_n^{p_n-1}:J_c\to J_0$ is a homeomorphism.
We have: $J_{-k}\subset W_{-k}$.

As $g_n^{-1}=\pm \sqrt{F_n^{-1}}$ where $F_n^{-1}$ is univalent in a neighborhood of $[-1,1]$ and $F_n^{-1}([g_n(0),1])=[0,1]$,
it is easy to see that $(g_n)_{\eps_k}^{-1}$ maps a shifted basic configuration $\mathcal{B}$ with the complementary interval $J_{-k}$ onto
another shifted basic configuration $\mathcal{B}'$ which is either in $\{\Re(z)\ge 0\}$ or in $\{\Re(z)\le 0\}$ and with the complementary interval $J_{-k-1}$. Moreover, it maps a base $B$ of $\mathcal{B}$ onto a base of $\mathcal{B}'$ if $g_n(0)\notin B$
and otherwise onto a figure of the form $\pm [0,x]\cup \pm [0,iy]$.
We begin with the basic configuration $\mathcal{B}_0$
which is $I_n\setminus J_0$. Therefore, $\mathcal{B}_1:=\Psi(\mathcal{B}_0)$ is a shifted basic configuration whose complementary interval is $J_c\ni g_n(0)$
and two basic intervals are two components of $[w_c',w_c]\setminus J_c$. Applying $g_n^{-1}$ to $\mathcal{B}_1$, we obtain the required curves,
with $w=g_n^{-1}(w_c)\cap\{\Re(z)>0\}>0$.
\end{proof}
\begin{coro}\label{c-omega-r}
$\overline{\Omega_0}\cap\R=[-w,w]$.
\end{coro}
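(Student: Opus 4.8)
The plan is to read off $\overline{\Omega_0}\cap\R$ directly from the description of $\partial\Omega_0$ obtained in Lemma \ref{l-domega} (together with Lemma \ref{l-inf}), using the symmetry of $\Omega_0$ with respect to both $\R$ and $i\R$ that was established in Lemma \ref{l-eps}(1a). First I would note that since $\Omega_0$ is open and symmetric w.r.t.\ $\R$, the intersection $\overline{\Omega_0}\cap\R$ is a closed subset of $\R$ which is symmetric about $0$, and it contains $J_0'\supset[-1,1]$; hence it suffices to determine its right endpoint, i.e.\ to show that $\sup(\overline{\Omega_0}\cap\R)=w$ and that $[0,w]\subset\overline{\Omega_0}$.

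For the first half, the point $w\in\R_+$ is, by Lemma \ref{l-domega}($+$), the right endpoint of the base $[j',w]$ of the boundary curve $\gamma_+$, so $w\in\partial\Omega_0\subset\overline{\Omega_0}$, giving $w\in\overline{\Omega_0}\cap\R$. Conversely, to see that no real point $x>w$ lies in $\overline{\Omega_0}$: the part of $\partial\Omega_0$ in $\overline\Pi$ consists only of the set $g_0^{-1}(\infty)\cap\overline\Pi$ (which by Lemma \ref{l-inf} is disjoint from $\R$), the curve $\gamma_+$, and the $i$-basic curve $\gamma^{im}$ (whose intersection with $\R$ is just $\{0\}$). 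Since $\gamma^{im}$ meets $\R$ only at $0$ and $g_0^{-1}(\infty)$ avoids $\R$ entirely, the only points of $\partial\Omega_0$ on $\R_+$ come from the base $B_{\gamma_+}=[j',w]$ of $\gamma_+$ (and its reflection, which is the same set). Because $\Omega_0\cap\R=J_0'$ is a bounded interval containing $[-1,1]$ and $\partial\Omega_0\cap\R_+$ is contained in $[j',w]$, connectedness of $\Omega_0\cap\R$ forces $\Omega_0\cap\R=(-w',w')$ with $w'\le w$ and $\overline{\Omega_0}\cap\R_+=[0,w]$ after taking closures; in particular nothing beyond $w$ is reached. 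This also shows $[0,w]\subset\overline{\Omega_0}$, completing the computation $\overline{\Omega_0}\cap\R=[-w,w]$.

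The main obstacle — really the only subtlety — is making the connectedness argument in the previous paragraph airtight: one must rule out the a priori possibility that $\overline{\Omega_0}\cap\R$ has a ``gap'' between $J_0'$ and an isolated piece near $w$, or that the base interval $[j',w]$ of $\gamma_+$ is disjoint from $\ov{J_0'}$ so that $w$ is not in the closure of the component of $\Omega_0$ containing $[-1,1]$. Here I would use that $\gamma_+$ is a connected boundary curve joining the base point $w\in\R_+$ to $\infty$ through $\Pi$, together with the fact (from the construction in the proof of Lemma \ref{l-domega}, where $w=g_n^{-1}(w_c)\cap\{\Re z>0\}$ and $g_0:(1,w)\to(1,g_0(w))$ is a diffeomorphism) that $g_0$ is regular and real on $(1,w)$, so $(1,w)\subset\Omega_0$; hence $w\in\ov{(1,w)}\subset\overline{\Omega_0}$ and the component of $\Omega_0\cap\R$ through $0$ is exactly $(-w,w)$ (using symmetry), whose closure is $[-w,w]$.
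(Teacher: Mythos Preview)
Your argument is correct and follows essentially the same route as the paper: use Lemma~\ref{l-inf} and Lemma~\ref{l-domega} to see that $\partial\Omega_0\cap\R_+$ lies in the base $B_{\gamma_+}=[j',w]$ of $\gamma_+$, then invoke the symmetry of $\Omega_0$. Two small remarks: the $i$-basic curve $\gamma^{im}$ actually has image in $\Pi\cup i\R_+$ and so does not meet $\R$ at all (not even at $0$), and your ``connectedness subtlety'' is not a genuine obstacle, since $\Omega_0\cap\R=J_0'=(-j',j')$ and the entire base $[j',w]\subset\gamma_+\subset\partial\Omega_0$ abuts $J_0'$ at $j'$, so $\overline{\Omega_0}\cap\R_+=(0,j')\cup[j',w]=(0,w]$ directly --- the paper's three-line proof simply omits this bookkeeping.
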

\begin{proof}
Let $x\in\partial\Omega_0\cap\R_+$. By Lemma \ref{l-inf} and Lemma \ref{l-domega}, $x\in (\gamma_+\cup\gamma^{im})\cap\R_+$, therefore,
$x$ is in the base $B_{\gamma_+}$, i.e., $x\le w$.
By symmetry of $\Omega_0$, we are done.
\end{proof}
\begin{lemma}\label{l-T}
$\mathcal{J}$ is disjoint with $\partial\C_{J_0}$ and is bounded.
\end{lemma}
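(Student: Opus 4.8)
The plan is to argue by contradiction, feeding the dichotomy of Lemma~\ref{l-alt} into the real dynamics of $g_0$ near its repelling fixed point $1$.

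First I would record two preliminary facts. (i) $T\subset\Omega_0$: indeed $g_0^{-i}([-1,1])\subset\Omega_0$ for every $i\ge 0$ (for $i=0$ because $[-1,1]\subset J_0'=\Omega_0\cap\R$, and for $i\ge 1$ because applying $g_0^{-1}$ already forces membership in $\Omega_0$); hence $\mathcal{J}=\overline T\subset\overline{\Omega_0}$, and by Corollary~\ref{c-omega-r} we get $\mathcal{J}\cap\R\subset[-w,w]$. (ii) $T$ is forward invariant, $g_0(T)\subset T$ (using $g_0([-1,1])\subset[-1,1]$), hence so is $\mathcal{J}\cap\Omega_0$ by density. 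Now suppose $\mathcal{J}$ is not compactly inside $\C_{J_0}$. Lemma~\ref{l-alt} produces $z_n\in T\cap\H^+$ with $z_n\to x_*\in(1,+\infty)$; since $z_n\in T\subset\Omega_0$, the limit $x_*$ lies in $\overline{\Omega_0}\cap\R=[-w,w]$, so in fact $x_*\in(1,w]$, and $x_*\in\mathcal{J}$ as $\mathcal{J}$ is closed.

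The dynamical core is the claim that $g_0(x)>x$ for every $x\in(1,w]$. I would prove this by first ruling out fixed points of $g_0$ in the open interval $(1,w)$. If $b$ were the smallest such fixed point, then $g_0-\mathrm{id}>0$ on $(1,b)$ (it is positive just to the right of $1$ since $g_0'(1)>1$ by Lemma~\ref{l-eps}(1c)), so the multiplier satisfies $0<g_0'(b)\le 1$ ($g_0$ is increasing on $(1,w)$ by Lemma~\ref{l-domega}, case~($+$)); thus $b$ is an attracting or parabolic fixed point of $g_0:\Omega_0\to\C_{J_0}$. By Lemma~\ref{l-eps}(1b) its immediate basin contains $0$, so $g_0^n(0)\to b$; but $g_0([-1,1])\subset[-1,1]$ gives $g_0^n(0)\in[-1,1]$ for all $n$, contradicting $b>1$. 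Having excluded fixed points in $(1,w)$, and knowing $g_0-\mathrm{id}>0$ near $1^+$ and $g_0(w)>w$ (Lemma~\ref{l-domega}, case~($+$)), the intermediate value theorem yields $g_0>\mathrm{id}$ on all of $(1,w]$.

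Granting this, I would iterate from $x_*$. Since $g_0$ is increasing on $(1,w)$ and strictly exceeds the identity there, the forward orbit $x_*,g_0(x_*),g_0^2(x_*),\dots$ is strictly increasing as long as it stays in $(1,w)$; by forward invariance each such (real) iterate lies in $\mathcal{J}$, hence in $\mathcal{J}\cap\R\subset[-w,w]$. The orbit cannot stay in $(1,w)$ forever (it would converge to a fixed point there, which we excluded), so at the first exit we have $g_0^{k}(x_*)=g_0\big(g_0^{k-1}(x_*)\big)\in(1,g_0(w))$, while also $g_0^{k}(x_*)\ge w$ and $g_0^{k}(x_*)\le w$; hence $g_0^{k}(x_*)=w$, so $w\in\mathcal{J}$ (the subcase $x_*=w$ is immediate). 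Finally, by Lemma~\ref{l-ext} (with $n=1$) the map $g_0$ extends holomorphically across $\partial\Omega_0$ to the simply connected domain $g_1^{-p_1}(\C_{J_1})$, which contains $I_1\supset[-w,w]$, hence an open neighbourhood of $w$. Choosing $t_m\in T$ with $t_m\to w$ and using $g_0(t_m)\in T$ together with continuity of this extension, we get $g_0(w)\in\overline T=\mathcal{J}$, so $g_0(w)\in\mathcal{J}\cap\R\subset[-w,w]$, i.e.\ $g_0(w)\le w$ — contradicting $g_0(w)>w$. Therefore $\mathcal{J}$ is compactly contained in $\C_{J_0}$, which is precisely the assertion that $\mathcal{J}$ is bounded and disjoint from $\partial\C_{J_0}=\R\setminus J_0$.

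The step I expect to be the crux is the dynamical claim $g_0>\mathrm{id}$ on $(1,w]$: excluding a neutral or attracting fixed point lying just past $1$ is exactly where one must combine the Fatou-type property of Epstein maps (Lemma~\ref{l-eps}(1b): the immediate basin of an attracting or parabolic cycle contains the critical point $0$) with the trivial fact that $0$ never leaves $[-1,1]$. The surrounding bookkeeping — forward invariance of $T$, the orbit landing exactly on $w$, and the use of the tower to make sense of $g_0(w)$ — is routine, but it genuinely needs the tower, since $w$ itself sits on $\partial\Omega_0$ where $g_0:\Omega_0\to\C_{J_0}$ is not a priori defined.
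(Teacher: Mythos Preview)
Your proof is correct and follows the paper's strategy: reduce via Lemma~\ref{l-alt} to $x_*\in(1,w]$, use the repelling fixed point at $1$ together with Lemma~\ref{l-eps}(1b) to rule out further fixed points on $(1,w]$, then push the real orbit of $x_*$ past $w$ and contradict Corollary~\ref{c-omega-r}. The paper's execution is slightly more direct: it tracks the \emph{complex} iterates $g_0^k(z_n)$, which automatically remain in $\Omega_0$ (since every forward iterate of a point of $T$ is again in $T\subset\Omega_0$), and simply notes that $g_0^k(z_n)\to g_0^k(x_*)>w$ contradicts $\overline{\Omega_0}\cap\R=[-w,w]$. This bypasses your detour through ``the orbit lands exactly on $w$, hence $w\in\mathcal{J}$, hence $g_0(w)\in\mathcal{J}$''.

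One small oversight in your write-up: your preliminary (ii) only gives $g_0(\mathcal{J}\cap\Omega_0)\subset\mathcal{J}$, but intermediate iterates $g_0^i(x_*)$ may well land in $[j',w)\subset\partial\Omega_0$ (recall $\Omega_0\cap\R=(-j',j')$ while $\overline{\Omega_0}\cap\R=[-w,w]$), not only the terminal one at $w$. The very same Lemma~\ref{l-ext} approximation argument you invoke for $w$ is needed at each such step to conclude $g_0^{i+1}(x_*)\in\mathcal{J}$. The paper's formulation sidesteps this entirely, since it never needs $g_0^i(x_*)\in\mathcal{J}$ for any $i$.
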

\begin{proof}
Assuming the contrary, by Lemma \ref{l-alt},
there exists a sequence $\{z_n\}\subset T\cap\H^+$ and a point $1<x_*<\infty$ such that $z_n\to x_*$.
As $T\subset\Omega_0$, $x_*\in\overline{\Omega_0}$. Then the case $x_*>w$ is ruled out by Corollary \ref{c-omega-r}.
Thus $1<x_*\le w$. The map $g_0:[1,w)\to [1,g(w))$ is a diffeomorphism. As $1$ is a repelling fixed point of $g_0$, $g_0(w)>w$
and, moreover, $g_0\in E_1$, the homeomorphism $g_0:[1,w]\to [1,g_0(w)]$ cannot have fixed points other than $1$. Therefore,
iterates of any point of $(1,w]$ must leave $(1, w]$. Hence, $g_0^k(x_*)>w$ for some $k>0$.
As $z_n\to x_*$ where $z_n\in T\subset\cup_{i\ge 0}g_0^{-i}(\Omega_0)$, all iterates $g_0^i(z_n)\in\Omega_0$, $i\ge 0$.
Thus $g_0^k(z_n)\in\Omega_0$. On the other hand, for $n\to\infty$, $g_0^k(z_n)\to g_0^k(x_*)>w$, and this case was ruled out at the beginning of the proof. A contradiction.
\end{proof}
Thus $\mathcal{J}$ is a compact subset of $U_0=\C_{J_0}$.
Let $\widehat{\mathcal{J}}$ be the topological hull of the compact $\mathcal{J}\subset\C$, i.e., the union of $\mathcal{J}$ with all
bounded components of $\C\setminus\mathcal{J}$. Since $U_0$ is simply connected and $\mathcal{J}\subset U_0$, then $\widehat{\mathcal{J}}\subset U_0$.
\begin{lemma}\label{l-compl-inv}
$\widehat{\mathcal{J}}$ is completely invariant:
$$g_0^{-1}(\widehat{\mathcal{J}})=\widehat{\mathcal{J}}.$$
\end{lemma}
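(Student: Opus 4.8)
The plan is to deduce the equality from the way $g_0$ moves the connected components of $\C\setminus\mathcal{J}$, using only that $g_0\colon\Omega_0\to\C_{J_0}$ is proper and open, together with the facts already in hand: $\mathcal{J}$ is a compact subset of $U_0=\C_{J_0}$ and $\Omega_0$ is simply connected.

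First I would record the complete invariance of $\mathcal{J}$ itself, $g_0^{-1}(\mathcal{J})=\mathcal{J}$. The inclusion $\mathcal{J}\subset g_0^{-1}(\mathcal{J})$ is clear because $g_0^{-1}(\mathcal{J})$ is compact (properness, since $\mathcal{J}$ is compact in the target), hence closed, and it contains $T=\bigcup_i g_0^{-i}([-1,1])$, hence its closure $\mathcal{J}$. The reverse inclusion follows from $g_0$ being an open map: for an open map $f$ one has $f^{-1}(\overline A)\subset\overline{f^{-1}(A)}$, so $g_0^{-1}(\mathcal{J})=g_0^{-1}(\overline T)\subset\overline{g_0^{-1}(T)}=\overline T=\mathcal{J}$. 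In particular $\mathcal{J}\subset\Omega_0$, and since $g_0(T)\subset T$ (forward invariance of $T$) we also get $g_0(\mathcal{J})\subset\mathcal{J}$. Next I would check $\widehat{\mathcal{J}}\subset\Omega_0$: if $C$ is a bounded component of $\C\setminus\mathcal{J}$ then $\partial C\subset\mathcal{J}\subset\Omega_0$, while, $\Omega_0$ being simply connected, every component of $\C\setminus\Omega_0$ is unbounded; a component $K$ of $\C\setminus\Omega_0$ meeting $C$ would be unbounded, hence would meet both $C$ and $\C\setminus\overline C$ while avoiding $\partial C$, contradicting its connectedness. So $C\subset\Omega_0$, hence $\widehat{\mathcal{J}}\subset\Omega_0$, and moreover each $\overline C$ is a compact subset of $\Omega_0$.

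The heart of the argument is to show that for every bounded component $C$ of $\C\setminus\mathcal{J}$ one has both $g_0(C)\subset\widehat{\mathcal{J}}$ and $g_0^{-1}(C)\subset\widehat{\mathcal{J}}$. In either case the relevant set — $g_0(C)$, or a connected component $D$ of $g_0^{-1}(C)$ — is open, connected, and bounded, boundedness coming from applying properness of $g_0$ to the compact set $\overline C\subset\Omega_0$. Using $g_0(\mathcal{J})\subset\mathcal{J}$ and $g_0^{-1}(\mathcal{J})=\mathcal{J}$, one checks that this set is disjoint from $\mathcal{J}$ and that its boundary is contained in $\mathcal{J}$ (for $g_0(C)$: a boundary point is the $g_0$-image of a point of $\partial C\subset\mathcal{J}$; for $D$: $\partial D\subset\partial\big(g_0^{-1}(C)\big)\subset g_0^{-1}(\partial C)\subset\mathcal{J}$). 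The decisive observation is then: a nonempty bounded open connected set $V$ with $V\cap\mathcal{J}=\emptyset$ and $\partial V\subset\mathcal{J}$ cannot be contained in the unbounded component $\Omega_\infty$ of $\C\setminus\mathcal{J}$, for otherwise $V$ would be open and closed in $\Omega_\infty$ (closed because $\partial V$ misses $\Omega_\infty$), forcing $V=\Omega_\infty$ — impossible since $V$ is bounded. Hence $V$ lies in a bounded component of $\C\setminus\mathcal{J}$, i.e. $V\subset\widehat{\mathcal{J}}$; applying this to $g_0(C)$ and to every component $D$ of $g_0^{-1}(C)$ gives the claim. This step is the one I expect to be the main obstacle: a priori the preimage of a bounded complementary component of $\mathcal{J}$ could leak into the unbounded complementary component, and it is exactly the fact that $\partial\big(g_0^{\pm1}(C)\big)$ is genuinely inside $\mathcal{J}$ (not merely in its closure) that excludes this.

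Finally I would assemble: writing $\widehat{\mathcal{J}}=\mathcal{J}\cup\bigcup_C C$ over the bounded components $C$ of $\C\setminus\mathcal{J}$, the previous steps give $g_0^{-1}(\widehat{\mathcal{J}})=\mathcal{J}\cup\bigcup_C g_0^{-1}(C)\subset\widehat{\mathcal{J}}$ and $g_0(\widehat{\mathcal{J}})=g_0(\mathcal{J})\cup\bigcup_C g_0(C)\subset\widehat{\mathcal{J}}$; the latter, with $\widehat{\mathcal{J}}\subset\Omega_0$, yields $\widehat{\mathcal{J}}\subset g_0^{-1}\big(g_0(\widehat{\mathcal{J}})\big)\subset g_0^{-1}(\widehat{\mathcal{J}})$, and the two inclusions combine to the asserted equality. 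It is worth noting that nowhere does the argument use $\deg g_0=2$; it works for any proper open holomorphic map.
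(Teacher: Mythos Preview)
Your proof is correct and follows essentially the same route as the paper's: first establish $g_0^{-1}(\mathcal{J})=\mathcal{J}$, then show that the bounded complementary components of $\mathcal{J}$ are stable under $g_0^{\pm 1}$. The only stylistic difference is in the forward direction: the paper argues by lifting an arc from $g_0(z)$ to $\partial U_0$ inside $U_0\setminus\widehat{\mathcal{J}}$ and finding a crossing of $\partial\widehat{\mathcal{J}}$, whereas you use the uniform ``bounded open connected set with boundary in $\mathcal{J}$ must lie in a bounded component'' observation for both $g_0(C)$ and the components of $g_0^{-1}(C)$; this makes your treatment of the two directions more symmetric, and your explicit verification that $\widehat{\mathcal{J}}\subset\Omega_0$ fills in a step the paper leaves implicit. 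One small imprecision: for $g_0(C)$ the boundedness comes from continuity of $g_0$ on the compact $\overline{C}\subset\Omega_0$, not from properness (which you correctly invoke for $g_0^{-1}(C)$).
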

\begin{proof}
By Lemma \ref{l-T},
$\mathcal{J}$ is compactly contained in the domain $U_0$. Therefore, as $\mathcal{J}=\overline{T}$ and $g_0^{-1}(T)=T$, by continuity,
$g_0^{-1}(\mathcal{J})=\mathcal{J}$ as well.
Let's prove that $X:=\widehat{\mathcal{J}}$ is completely invariant, i.e., $g_0^{-1}(X)=X$. If $z\in \partial X$ then $g_0^{\pm}(z)\in X$ as
$\partial X\subset \mathcal{J}$.
Let $z\in W$, a component of $Int(X)$. As $\partial W\subset\partial X$ then $g_0^{-1}(\partial W)\subset X$, which implies that $g_0^{-1}(z)\subset X$.
As for $z_1=g_0(z)$, if we assume by a contradiction that $z_1\notin X$, then, since $U_0\setminus X$ is connected, one can join $z_1$ and $\partial U_0$ by a simple arc
$\sigma_1\subset U_0\setminus X$. Then a component $\sigma$ of $g_0^{-1}(\sigma_1)$ joins $z\in Int(X)$ with $\partial U_{-1}$. Hence,
$\sigma$ must cross $\partial X$ at some $w$. But then $g_0(w)\in \sigma_1\cap X$, a contradiction. Thus $g_0^{\pm}(W)\subset X$ as well.
\end{proof}
{\it End of the proof of Theorem \ref{t-m}.} $g_0: U_{-1}\to U_0$ is a proper analytic map of degree $2$ with a single critical point $0$, where $U_{-1}=\Omega_0\subset \C_{J_0}=U_0$.
As we have proved,
$\widehat{\mathcal{J}}\subset U_0$ is a completely invariant full compact containing $[-1,1]\ni 0$.
Therefore, Theorem \ref{t-bi} applies which guarantees the required PL restriction.

The last claim about the uniform bound follows from the fact that a collection of sequences $\mathcal{G}$ as in Theorem \ref{t-m} which satisfy the inequalities
(\ref{label-m}) is compact (the proof repeats the one of Proposition \ref{p-infren-tower}).
\section{Proof of Theorem \ref{t-su}}
It follows from the uniform bound of Theorem \ref{t-m} (with $K_1=K_2=L$ and $\Lambda=\lambda$, where, in turn, $\lambda, L$ depend only on $N$) along with Proposition \ref{p-infren-tower}.
\begin{remark}\label{r-alldeg}
All results hold if one replaces the degree $2$ of the unimodal map $g:[-1,1]\to [-1,1]$ of the Epstein class
by any even degree $d\ge 2$. The only noticeable change is in the proof of
Lemma \ref{l-alt} where Proposition \ref{p-ls}, which is proved in \cite{LS} for the map $z\mapsto z^2$, should be coupled with Lemma 15.2 of \cite{LS} about comparison of images under maps $z\mapsto z^d$ for different even $d$.
\end{remark}

\end{document}